\title{Edge-decomposing graphs into coprime forests~\thanks{Both authors were partially supported by ANR project Stint under reference ANR-13-BS02-0007 and by the LABEX MILYON (ANR-10-LABX-0070) of Université de Lyon, within the program Investissements d'Avenir (ANR-11-IDEX-0007) operated by the French National Research Agency (ANR). Klimošová was also supported by Center of Excellence – ITI, project P202/12/G061 of GA ČR and by Center for Foundations of Modern Computer Science (Charles Univ. project UNCE/SCI/004).}~\thanks{Extended abstract of this work was published as: T. Klimo\v{s}ov\'a, S. Thomass\'e: Decomposing graphs into paths and trees, Electron. Notes Discrete Math., 61 (2017) 751-757.}}
\author{Tereza Klimo\v{s}ov\'a\thanks{Department of Applied Mathematics, Faculty of Mathematics and Physics, Charles University,  
Malostransk\'e n\'am\v{e}st\'i 25, 118 00 Praha 1, Czech Republic. E-mail: {\tt tereza@kam.mff.cuni.cz}.} \and St\'ephan Thomass\'e\thanks{Laboratoire d’Informatique du Parall\'elisme,
\'Ecole Normale Sup\'erieure de Lyon,
69364 Lyon Cedex 07, France. E-mail: {\tt stephan.thomasse@ens-lyon.fr}.}~\thanks{Institut Universitaire de France}}
\begin{document}
\maketitle

\begin{abstract}

The Bar\'at-Thomassen conjecture, recently proved in~\cite{bib:BHLMT}, asserts that for every tree $T$, there is a constant $c_T$ such that every $c_T$-edge connected graph $G$ with number of edges (size) divisible by the size of $T$ admits an edge partition into copies of $T$ (a $T$-decomposition). In this paper, we investigate in which case the connectivity requirement can be dropped to a minimum degree condition. For instance,
it was shown in~\cite{bib:orig-paths} that when $T$ is a path with $k$ edges, there is a constant $d_k$ such that every $24$-edge connected graph $G$ with size divisible by $k$ and minimum degree $d_k$ has a $T$-decomposition. We show in this paper that when $F$ is a coprime forest (the sizes of its components being a coprime set of integers), any graph $G$ with sufficiently large minimum degree has an $F$-decomposition provided that the size of $F$ divides the size of $G$ (no connectivity is required). A natural conjecture asked in~\cite{bib:orig-paths} asserts that for a fixed tree $T$, any graph $G$ of size divisible by the size of $T$ with sufficiently high minimum degree has a $T$-decomposition, provided that $G$ is sufficiently highly connected in terms of the maximal degree of $T$. The case of maximum degree 2 is answered by paths. We provide a counterexample to this conjecture in the case of maximum degree 3.

\end{abstract}

\section{Introduction}\label{sec:intro}
Given a graph $G$, we denote by $V(G)$ and by $E(G)$ its vertex set and its edge set, respectively. For $X\subseteq V(G)$, $G[X]$ denotes the induced subgraph of $G$ on $X$. Unless we specify otherwise, we consider graphs to be simple graphs without loops and multigraphs to have multiple edges and loops.

For graphs $G$ and $H$, we say that $G$ is {\em $H$-decomposable} if there exists a partition $\{E_i\}_{i\in[k]}$ of $E(G)$ such that every $E_i$ forms an isomorphic copy of $H$. We then call $\{E_i\}_{i\in[k]}$ an {\em $H$-decomposition} of $G$. Note that if $G$ has an $H$-decomposition, $|E(H)|$ divides $|E(G)|$.
More generally, if there exists a partition $\{E_i\}_{i\in[k]}$ of $E(G)$ such that each $E_i$ forms an isomorphic copy of one of the graphs $H_1,\ldots, H_j$, we call $\{E_i\}_{i\in[k]}$ a $H_1,\ldots, H_j$-decomposition of $G$.

In~\cite{bib:barat-thomassen}, Bar\'at and Thomassen conjectured that for a fixed tree $T$, every sufficiently edge-connected graph with number of edges divisible by $|E(T)|$ has a $T$-decomposition.

\begin{conjecture}\label{conj:barat-thomassen}
 For any tree $T$ on $m$ edges, there exists an integer $k_T$ such that every $k_T$-edge-connected graph with number of edges divisible by $m$ has a $T$-decomposition.
\end{conjecture}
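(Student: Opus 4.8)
The plan is to build the $T$-decomposition in two phases — an \emph{approximate} phase covering all but a bounded-degree remainder, and an \emph{absorbing} phase that mops up the remainder — with the path case of the conjecture isolated as a reusable gadget. Fix a tree $T$ with $m$ edges and let $G$ be $k_T$-edge-connected with $m \mid |E(G)|$, $k_T$ a large constant to be chosen last. The starting point is that high edge-connectivity forces local richness: $G$ has minimum degree at least $k_T$ and, by the Nash--Williams/Tutte theorem, $\lfloor k_T/2\rfloor$ edge-disjoint spanning trees, and (via splitting-off arguments) every two vertices are joined by $\Omega(k_T)$ edge-disjoint paths. Consequently, around every vertex one can greedily embed many edge-disjoint copies of $T$, of the path $P_m$, and of small ``connector'' gadgets, anywhere in $G$.

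Phase 1 (approximate decomposition): I would run a semi-random (Rödl nibble) process that repeatedly selects a random collection of edge-disjoint copies of $T$; since $G$ is locally $T$-rich, after a bounded number of rounds the leftover graph $R$ is pseudorandom and carries only an $o(1)$-fraction of $E(G)$. The essential refinement is to interleave the nibble rounds with ``cover-down'' steps that drive the \emph{maximum degree} of $R$ — not merely its density — below an absolute constant $C = C(T)$; this is the iterative-absorption bookkeeping, and here one must keep the connectivity requirement uniform in $G$. Because $m\mid|E(G)|$ and each removed copy has exactly $m$ edges, the leftover always satisfies $m\mid|E(R)|$.

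Phase 2 (absorption): Before Phase 1 begins, reserve an \emph{absorber} $A\subseteq E(G)$, together with a template, such that for \emph{every} graph $R$ on $V(G)$ with $\Delta(R)\le C$ and $m\mid|E(A)|+|E(R)|$, the graph $A\cup R$ admits a $T$-decomposition. Constructing $A$ is the heart of the matter: tile $G$ with many small edge-disjoint absorbing gadgets, each able to absorb a bounded amount of arbitrary low-degree remainder into $T$-copies, and link them using the $\Omega(k_T)$ edge-disjoint paths between any two vertices so the global $\bmod\ m$ constraint can always be balanced. Inside each gadget, the (separately proved) path case of the conjecture — for paths of length $m, 2m, \dots$ — is convenient for re-routing leftover fragments into copies of $T$.

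The main obstacle is the absorber. The nibble and cover-down steps are routine once local richness is in hand; the real difficulty is producing a \emph{single} absorbing structure that is simultaneously sparse enough to be reserved without obstructing Phase 1, robust to the entire family of possible bounded-degree remainders, and compatible with the divisibility condition for \emph{every} such remainder — all while the edge-connectivity threshold $k_T$ depends on $T$ only, never on $G$. Controlling the interaction between the absorber, the cover-down steps, and the $\bmod\ m$ arithmetic is where the argument will be most delicate; everything else reduces to high-connectivity routing together with the path case.
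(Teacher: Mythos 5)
This statement is not proved in the paper: it is presented as Conjecture~\ref{conj:barat-thomassen}, and its resolution is simply cited as Theorem~\ref{thm:barat-thomassen} from~\cite{bib:BHLMT}, so there is no internal proof to compare your sketch against. That said, your plan has two genuine gaps. The absorber, which you rightly identify as ``the heart of the matter,'' is never constructed: you list the properties it should have and flag why achieving them is delicate, but that deferral \emph{is} the entire technical content of the problem, not a detail to be filled in later. A sketch whose hardest component is left as a wish list is a research programme, not a proof.

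More fundamentally, the R\"odl-nibble / iterative-absorption framework you invoke in Phase~1 is not obviously applicable here. The hypothesis is only that $G$ is $k_T$-edge-connected for a \emph{constant} $k_T$, so $G$ may be $k_T$-regular and hence of bounded degree. The nibble and the ``cover-down to bounded maximum degree'' bookkeeping are designed for dense or quasirandom hosts in which every edge lies in many copies of $T$; in a bounded-degree graph there is no such abundance and no pseudorandomness to exploit, and the claim that a few nibble rounds drive $\Delta(R)$ below an absolute constant while keeping the leftover absorbable is far from routine --- it is not even clear that it holds. The published proof in~\cite{bib:BHLMT} avoids this obstacle by taking a structural, inductive route built on edge-disjoint spanning trees (Nash--Williams/Tutte) and the path-decomposition theorem (cf.\ Theorem~\ref{thm:orig-paths}), rather than a probabilistic approximate-decomposition-plus-absorption route; that is a substantially different strategy from the one you outline, and the difference is forced precisely by the sparsity issue above.
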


They also observed a relation between $T$-decompositions and Tutte's conjecture, which states that every $4$-edge-connected graph admits a nowhere-zero $3$-flow. Until recently it was not even known that any constant edge-connectivity would suffice.
Bar\'at and Thomassen have shown that if every $8$-edge-connected graph has a $K_{1,3}$-decomposition, then every $8$-edge-connected graph has a nowhere-zero $3$-flow and, vice versa, Tutte’s $3$-flow conjecture would imply that every $10$-edge-connected graph with number of edges divisible by 3 has a $K_{1,3}$-decomposition.


A series of results showing that Conjecture~\ref{conj:barat-thomassen} holds for specific trees followed~\cite{bib:BT-4, bib:thomassen-paths, bib:BT-bistars, bib:BT-diam4, bib:BT-paths3, bib:BT-paths2, bib:BT-paths5, bib:BT-botler-all-paths, bib:thomassen-3-flow, bib:orig-paths}. 
Recently, the conjecture was proven by Bensmail, Harutyunyan, Le, Merker and the second author~\cite{bib:BHLMT}.

\begin{theorem}\label{thm:barat-thomassen}
For any tree $T$, there exists an integer $k_T$ such that every $k_T$-edge-connected graph with number of edges divisible by $|E(T)|$ has a $T$-decomposition.
\end{theorem}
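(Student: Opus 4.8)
The plan is to prove Theorem~\ref{thm:barat-thomassen} by a chain of reductions that progressively tame the host graph $G$, using the large edge-connectivity purely as a source of flexibility, and then to assemble copies of $T$ by combining explicit decompositions of simpler host graphs (paths, stars, bounded-degree graphs) with an absorption step that repairs a bounded remainder. Write $m=|E(T)|$, and let $k_T$ be a large constant to be fixed.

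First I would use connectivity structurally. If $k_T$ is large, then by the Nash--Williams--Tutte theorem $G$ contains $t$ edge-disjoint spanning trees for $t$ as large as we like; reserve a \emph{backbone} $B$ formed of several of them. The backbone buys two things. (i) By Thomassen-type results on highly edge-connected graphs, we may peel off from $G$ a spanning subgraph $M$ all of whose degrees lie in a prescribed residue class modulo any fixed integer (subject only to the obvious congruence on $|E(G)|$, which our divisibility hypothesis supplies), so that $G-E(M)$ has every degree divisible by a convenient number and hence decomposes, component by component, into closed trails. (ii) The spare spanning trees let us \emph{reroute} edges between far-apart regions of $G$, so the residue of the number of edges carved into any region can be corrected by a bounded local modification. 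Conceptually, $B$ reduces the global statement to a family of bounded-size local tasks glued along a connected flexible skeleton; this is exactly the place where ``$k_T$-edge-connected'' does work that ``minimum degree $d_T$'' cannot (cf.\ the coprime-forest phenomenon in the abstract).

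The core is then to show that a graph with high minimum degree together with such a flexible backbone is $T$-decomposable. Here I would (a) build a small \emph{absorbing structure} $A$ inside $G$ out of the backbone and many private copies of $T$, arranged so that for every bounded-size graph $R$ attached to $A$ in a controlled way with $m\mid |E(A)|+|E(R)|$ the graph $A\cup R$ has a $T$-decomposition; (b) greedily pack copies of $T$ into $G-E(A)$ --- any graph of minimum degree at least $m$ contains a copy of every tree with $m$ edges, so greedy packing, together with the rerouting provided by $B$, can be driven until only a bounded-size remainder $R$ survives, all while keeping the divisibility bookkeeping correct; (c) absorb $R$ with $A$. Steps (a) and (c) are where the results already in the literature enter as black boxes: decompositions of long paths and of bounded-degree subgraphs into copies of $T$ are the bricks from which a sufficiently robust absorber is assembled.

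The main obstacle, carrying essentially all the technical weight, is the branching of a general tree: unlike a path, a copy of $T$ cannot be produced by concatenating smaller pieces along a line, so both the greedy packing at branch points and --- above all --- the construction of an absorber able to swallow an \emph{arbitrary} bounded-size remainder require genuinely new arguments rather than a plain reduction to the path case. A secondary difficulty is exactness: turning the easy fractional (or approximate) $T$-decomposition into an honest edge partition is precisely what the reserved absorber, fed by connectivity-driven rerouting, is meant to accomplish. I would expect, as in~\cite{bib:BHLMT}, that stars and paths must be handled first --- the star case via its link to nowhere-zero $3$-flows noted above --- and the general tree case built on top of them.
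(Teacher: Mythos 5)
The paper does not prove Theorem~\ref{thm:barat-thomassen}: it is imported wholesale from~\cite{bib:BHLMT} and used throughout as a black box (e.g.\ in the proofs of Lemma~\ref{lem:conn-twotrees} and Lemma~\ref{lem:3tree}), so there is no in-paper argument to compare your attempt against. Judged on its own terms, what you have written is a strategy outline whose two load-bearing steps are asserted rather than established, and each hides a genuine gap. First, the claim that greedy packing of copies of $T$ into $G-E(A)$ ``can be driven until only a bounded-size remainder survives'' is not a property of greedy packing: on a graph of minimum degree at least $m$, a naive greedy can stall with $\Theta(|E(G)|)$ edges unused, and the backbone ``rerouting'' that is supposed to fix this is invoked without any mechanism. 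Second, and more seriously, the absorber is said to be ``assembled'' from black boxes including ``decompositions of bounded-degree subgraphs into copies of $T$'', but a statement of the form ``every bounded-degree graph (or every graph attached to the absorber in a controlled way) of size divisible by $m$ is $T$-decomposable'' is essentially the theorem itself, and no such off-the-shelf result exists --- it is not even true without further hypotheses. The sound parts of the sketch --- Nash--Williams--Tutte spanning trees, a Thomassen-style subgraph $M$ with prescribed degree residues so that $G-E(M)$ has convenient degrees, and the correct identification that branching at internal vertices of $T$ is the core obstacle --- are standard openers in this literature, but by themselves they reduce nothing. The content of~\cite{bib:BHLMT} lies precisely in the machinery you have left unspecified, so the proposal should be treated as a (reasonable) research plan rather than a proof.
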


In~\cite{bib:orig-paths}, the authors posed the following, strengthened version of the conjecture of Bar\'at and Thomassen and they proved it for $T$ being a path.

\begin{conjecture}\label{conj:wrong}
There is a function $f$ such that, for any fixed tree $T$ with
maximum degree $\Delta_T$, every $f(\Delta_T)$-edge-connected graph with  minimum degree at least $f(|E(T)|)$ and number of edges divisible by $|E(T)|$ has a $T$-decomposition.
\end{conjecture}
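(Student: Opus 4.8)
Since the abstract already announces that Conjecture~\ref{conj:wrong} is \emph{false} for maximum degree~$3$, what has to be established here is its negation: for every candidate function $f$ one must exhibit a tree $T$ with $\Delta_T=3$ and a graph $G$ that is $f(\Delta_T)$-edge-connected, has minimum degree at least $f(|E(T)|)$, satisfies $|E(T)|\mid|E(G)|$, and admits no $T$-decomposition. Concretely, I would try to fix one tree $T$ with $\Delta_T=3$ and, for every pair of integers $(k,d)$, construct a graph $G_{k,d}$ that is $k$-edge-connected, has minimum degree at least $d$, has $|E(T)|\mid|E(G_{k,d})|$, yet is not $T$-decomposable; this beats any $f$, since $f(\Delta_T)=f(3)$ and $f(|E(T)|)$ are then just two fixed integers.

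The mechanism I would try first exploits the gap between edge- and vertex-connectivity: a graph can be highly edge-connected while still having a cut vertex of huge degree (two large cliques sharing a vertex is the model example). So I would build $G_{k,d}$ out of many highly connected, high-minimum-degree blocks (large cliques $K_N$, or denser gadgets) pasted together along shared cut vertices in a long cyclic chain. In any $T$-decomposition, each copy either lies inside one block or \emph{straddles} a cut vertex, and the structure of $T$ forces a straddling copy to split into two subtrees whose edge counts $(m_N,\,|E(T)|-m_N)$ lie in a set depending only on $T$. The heart of the argument is to pick $T$ rigid enough that this set misses a whole residue class modulo some integer $p$; spiders are too flexible for this (a long leg realises every split $m_N=1,2,3,\dots$), so $T$ should have no long pendant paths --- e.g.\ a carefully tailored tree all of whose branch-subtrees have sizes in one class modulo $p$ --- after which one chooses the block sizes $N$ so that each contributes $\binom{N}{2}$ edges in the ``forbidden'' class mod $p$ while the total is still divisible by $|E(T)|$. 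With this in place, no consistent distribution of straddling pieces over the cut vertices can exist, so $G_{k,d}$ is not $T$-decomposable. (A single cut vertex is not enough --- there the one congruence it imposes is always satisfiable --- which is exactly why a long chain of blocks, accumulating the obstruction, is needed.)

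It would then remain to check the hypotheses, which should be routine: $\delta(G_{k,d})$ and the edge-connectivity are both controlled from below by the block size and the blocks' own connectivity, and divisibility by $|E(T)|$ is arranged by tuning the number and sizes of the blocks, there being enough slack because only the \emph{sum} of the $\binom{N}{2}$'s needs to be a multiple of $|E(T)|$. Since $(k,d)$ is arbitrary, no $f$ works, and Conjecture~\ref{conj:wrong} fails already for $\Delta_T=3$; this is sharp, since $\Delta_T=2$ (paths) is true by~\cite{bib:orig-paths}.

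The main obstacle is the first half of the middle paragraph: locating the tree $T$ (and the modulus $p$) for which straddling copies are genuinely obstructed. Every naive double-counting invariant --- numbers of branch vertices, of leaves, of degree-$2$ images, incidences with a fixed vertex set, bipartite-class sizes --- turns out to be automatically satisfiable in any $d$-regular host, so the obstruction has to be a genuinely global, modular one coming from how the few rigidly placed pieces of $T$ must fit around the separator structure of $G$. Getting the tree, the modulus, and the block sizes to line up simultaneously is where the real work lies; if the cut-vertex construction resists, the natural fallbacks are other bounded-"width" hosts (powers of long cycles, or long cyclic chains of gadgets) on which a similar global/modular quantity can be forced into the wrong class.
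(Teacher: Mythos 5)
You correctly read the task as disproving the conjecture, and your general strategy --- a modular counting obstruction on the copies of $T$ that straddle a small bottleneck --- is the right one. But the proposal stops exactly where the proof has to start: you leave open which tree, which modulus, and why the straddling copies are genuinely obstructed, and you say so yourself. The paper's two missing ingredients are these. First, the bottleneck is not a cut vertex but a matching $M$ of $f(3)$ edges joining two graphs $G_1,G_2$, whose endpoints inside each $G_i$ are pairwise at distance greater than $2k$ (more than the diameter of $T$); this forces every copy of $T$ in a decomposition to use at most one edge of $M$, so its contribution to $E(G_1)$ is exactly the size of one component of $T\setminus e$ for a single edge $e$. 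With a cut vertex, as in your construction, a straddling copy can send several subtrees into each side, so the set of achievable splits is the much larger set of sums of rooted-subtree sizes, and the obstruction you are hoping for largely evaporates --- this is precisely the difficulty you flag at the end. Second, the tree is the complete binary tree $T_k$ of depth $k$: it has $n_k=2^{k+1}-2$ edges, yet the set of component sizes of $T_k\setminus e$ over all edges $e$ has at most $2k$ elements, so the sums $\sum_{i=1}^{f(3)}t_i$ of such sizes take at most $(2k)^{f(3)}<n_k$ values modulo $n_k$ once $k$ is large, and one simply tunes $|E(G_1)|$ to a missed residue class. No search for a special modulus $p$ or for blocks with $\binom{N}{2}$ edges in a forbidden class is needed.

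There is also a quantifier problem in your plan: you propose to fix one tree $T$ and defeat every pair $(k,d)$ with graphs $G_{k,d}$. The negation of the conjecture only requires that for each $f$ there exist some $T$ and some $G$, and the paper exploits this freedom essentially: the depth $k$ (hence the tree $T_k$) is chosen \emph{after} $f$, large enough that $(2k)^{f(3)}<n_k$. For a fixed tree, the number of bottleneck edges must grow with the required edge-connectivity, and sums of that many split-sizes will typically cover every residue modulo $|E(T)|$, so a single tree is unlikely to work with this kind of argument. As written, the proposal is a program rather than a proof; the decisive constructions (the far-apart matching and the binary tree with its sparse set of splits) are absent.
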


We define the {\em length} of a path as its number of edges. The following result from~\cite{bib:orig-paths} answers the previous question for $\Delta_T=2$.

\begin{theorem}\label{thm:orig-paths}
For every integer $\ell$, there exists $d=d(\ell)$ such that the edge set of every $24$-edge-connected graph $G$ with minimum degree $d$ and number of edges divisible by $\ell$ has a decomposition into paths of length $\ell$.
\end{theorem}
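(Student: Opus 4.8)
The plan is to use the two hypotheses of Theorem~\ref{thm:orig-paths} in complementary roles: the $24$-edge-connectivity to control the global cut structure of $G$ and reduce the bulk of $E(G)$ to a shape that chops cleanly into paths of length $\ell$, and the large minimum degree $d=d(\ell)$ to perform local surgery and to build an \emph{absorbing} gadget that swallows whatever small awkward remainder is left over. Since $G$ is $24$-edge-connected we may assume throughout that $G$ is connected.

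\emph{Step 1 (an absorber).} Using $\delta(G)\ge d$ with $d$ large compared with $\ell$, I would first reserve an edge-sparse collection $A\subseteq E(G)$, spread over $G$, with the property that for every subgraph $R$ of $G-A$ which is a vertex-disjoint union of paths each of length less than $\ell$, if $\ell$ divides $|A|+|E(R)|$ then $A\cup R$ admits a $P_\ell$-decomposition. The point of $A$ is that from then on we only need to decompose $G-A$ \emph{up to} such a remainder $R$. Building $A$ and checking its absorbing property is a purely local task: around each potential trouble spot one attaches, via fresh neighbours supplied by the minimum-degree condition, a small reconfigurable pattern (a bounded union of stars and short paths) that can be recombined with a short path in several ways, and one verifies by a short parity/divisibility argument on lengths that the pieces always reassemble into paths of length exactly $\ell$. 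Care is needed so that deleting $A$ does not spoil the edge-connectivity used in Step~2, so $A$ is chosen to meet each small edge-cut of $G$ in only boundedly many edges, which is possible since $A$ can be taken very sparse.

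\emph{Step 2 (the bulk, via $24$-edge-connectivity).} It remains to decompose $G':=G-A$ into paths of length $\ell$ with a remainder $R$ of the above shape and with $\ell\mid |A|+|E(R)|$. Here I would use the $24$-edge-connectivity of $G'$ (which survives Step~1 up to a bounded loss) in the spirit of supereulerian and spanning-tree-packing results to edge-partition $G'$ into a \emph{bounded} number of trails: a highly edge-connected graph packs many spanning trees, and, more to the point, one can reroute the parity defects (the odd-degree vertices) through these trees into a single bounded region, leaving the rest of $G'$ with all degrees even, hence a disjoint union of closed trails, and then amalgamate these into boundedly many closed trails using the remaining connectivity. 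A connected even graph is a single Eulerian circuit, which we cut into $\lfloor |E|/\ell\rfloor$ consecutive paths of length $\ell$ followed by one path of length $<\ell$, and similarly for each of the boundedly many trails. Collecting the short leftover sub-paths yields a remainder $R$ of the shape required in Step~1 with $|E(R)|$ bounded; by sliding edges into the last, short piece of each trail one arranges $\ell\mid |A|+|E(R)|$, using that $\ell\mid|E(G)|$ by hypothesis. Feeding $A\cup R$ into the absorber of Step~1 completes the decomposition.

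\emph{Main obstacle.} The delicate point is Step~2: converting ``$24$-edge-connected'' into ``the non-absorber part decomposes into boundedly many closed trails'' while keeping the total leftover bounded and compatible with the divisibility constraint. The number of odd-degree vertices of $G$ is not bounded, so one cannot make $G$ Eulerian by a bounded modification; the connectivity must genuinely be used to push parity defects into a single bounded region (equivalently, to pack enough spanning connected even subgraphs), and arranging this to cost only boundedly many leftover edges — so that an absorber of bounded or merely linear size can finish — is where the constant $24$ and the bulk of the technical work lie. The minimum-degree hypothesis, by contrast, enters only locally through Step~1, and the interface between the two halves of the argument is the parity/divisibility bookkeeping tying $|A|$, $|E(R)|$ and $|E(G)|$ together.
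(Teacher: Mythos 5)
This theorem is not proved in the paper you are reading: it is quoted from~\cite{bib:orig-paths} as prior work, so there is no ``paper's own proof'' here to compare your argument against. Judged on its own merits, your outline assigns the two hypotheses sensible roles, but both load-bearing steps are asserted rather than argued, and the more delicate of the two contains a genuine error.

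The error is in Step~2. You aim to reduce $G'$ to ``boundedly many closed trails'' by pushing parity defects into a single bounded region, leaving a \emph{bounded} leftover $R$. This cannot work as stated: a connected graph with $2t$ odd-degree vertices decomposes into $\max(1,t)$ open trails, and $t$ is typically $\Theta(|V(G)|)$, not bounded. The natural fix is to remove a $T$-join $J$ on the odd set (which the $12$ edge-disjoint spanning trees from $24$-edge-connectivity certainly permit while keeping $G'-J$ connected, hence Eulerian); but $J$ itself has up to $|V(G)|-1$ edges. So after chopping the single Eulerian circuit of $G'-J$ into length-$\ell$ segments, the piece still awaiting decomposition is $J$ together with a short tail, which is $\Omega(|V(G)|)$ edges and of essentially arbitrary shape — far beyond what a bounded (or even sparse-but-structured) absorber as sketched in Step~1 could swallow. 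Step~1 has its own gap: you declare the absorber construction a ``purely local task,'' but the leftover paths in $R$ may terminate at arbitrary vertices of $G$, so $A$ would need to be able to receive and extend stubs of every residue length $1,\dots,\ell-1$ at essentially any vertex; that is a global design problem, and no mechanism for it is given. In short, the plan correctly identifies that connectivity should handle the bulk and minimum degree the local repair, but the interface between them — controlling the size and shape of the leftover so that anything can absorb it — is exactly where the argument breaks, and that is where the published proof of this theorem (in~\cite{bib:orig-paths}) invests its technical effort.
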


However, Conjecture~\ref{conj:wrong} is not true in general. In Section~\ref{sec:disprove}, we show that it does not hold even for trees of maximum degree three. 

In Section~\ref{sec:twotrees}, we consider a variation of Conjecture~\ref{conj:wrong} for forests. We call a tree {\em proper} if it has at least one edge. We call a forest {\em proper} if each of its connected components is a proper tree. Note that if $F$ is a forest which is not proper, for the purpose of finding an $F$-decomposition, one can disregard components with a single vertex, provided that the host graph has at least as many vertices as $F$. Having this in mind, we state our result only for proper forests.

Theorem~\ref{thm:barat-thomassen} can be easily extended to proper forests by gluing several copies of a forest together (see~\ref{sec:twotrees} for details). Moreover, one can replace edge-connectivity requirement by minimum degree requirement in case of {\em coprime forests}.
We call a forest {\em coprime} if there is no integer $d>1$ which divides the number of edges of all of its components.

\begin{theorem}\label{thm:forest}
For any proper coprime forest $F$, there exists an integer $\delta=\delta(F)$ such that every graph with minimum degree at least $\delta$ and number of edges divisible by the number of edges of $F$ has an $F$-decomposition.
\end{theorem}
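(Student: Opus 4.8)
Write $F = T_1 \sqcup \dots \sqcup T_r$ with $m_i = |E(T_i)|$ and $m = |E(F)| = \sum_i m_i$; coprimality means $\gcd(m_1,\dots,m_r)=1$. The plan is to reduce the theorem to producing a \emph{balanced} decomposition of $G$ into copies of the individual trees $T_1,\dots,T_r$, meaning one in which, for some integer $q$, exactly $q$ copies of each $T_i$ are used; such a decomposition partitions into $q$ copies of $F$. The point of coprimality is that the lattice $\Lambda=\{c\in\mathbb{Z}^r:\sum_i c_i m_i=0\}$ of admissible exchanges between copies of the $T_i$ has rank $r-1$, so once one has \emph{any} $T_1,\dots,T_r$-decomposition of $G$, the vector of multiplicities can be steered to a balanced one by finitely many exchanges, each of bounded size and realizable locally inside a sufficiently rich part of $G$. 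This is exactly what fails — and must fail — without coprimality: already for $F$ a disjoint union of two copies of $P_2$, a disjoint union of two complete graphs of suitably chosen odd orders has arbitrarily large minimum degree and an appropriate number of edges but no $F$-decomposition, since each component has an odd number of edges.

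First I would split off the highly connected part of $G$. Fix $k=k(F)$ at least the connectivity threshold $k_F$ obtained by applying Theorem~\ref{thm:barat-thomassen} to $F$ (which extends to proper forests by gluing copies of $F$ into a tree, as noted before the statement of Theorem~\ref{thm:forest}), plus a safety margin of $m$. Repeatedly cutting $G$ along edge cuts of size less than $k$ and collecting the cut edges, one obtains an edge partition $E(G)=E_0\sqcup E_1\sqcup\dots\sqcup E_s$ in which every $G_i:=G[E_i]$ with $i\ge 1$ is $k$-edge-connected, while $G_0:=G[E_0]$ is sparse — for concreteness one aims for $\Delta(G_0)$ bounded in terms of $F$, absorbing cut edges incident to a common vertex in bounded batches if necessary. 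The delicate point here is the minimum-degree bookkeeping: a cut of size less than $k$ leaves each side with at least $\delta(G)-k$ vertices and removes fewer than $k$ edges at any vertex, so by choosing the cuts carefully one should keep $\delta(G_i)\ge\delta(G)/2$, which in turn forces $|V(G_i)|$ to be large. I expect making this iteration terminate with the minimum degree essentially preserved to be the first genuine hurdle.

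Next I would absorb $E_0$ into the pieces while keeping the decomposition balanced. Each endpoint of an edge of $G_0$ still has huge degree inside its own piece, so every edge of $E_0$ can be placed into a copy of a tree $T_j$ of our choosing whose remaining $m_j-1$ edges are fresh edges drawn from the incident piece (or, when a piece is too small to lend edges, from the abundant edges leaving it); doing this one shrinks each $G_i$ to a subgraph $G_i'$ that is still $k_F$-edge-connected and, after shedding up to $m-1$ further edges into the same process, has $|E(G_i')|$ divisible by $m$. Because $\Delta(G_0)$ is bounded, the imbalance introduced is bounded, so the freedom in choosing which $T_j$ to use, together with the bounded sequence of $\Lambda$-exchanges from the first paragraph performed inside one large piece, returns the global multiplicity vector to a balanced one. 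Finally each $G_i'$ is $k_F$-edge-connected with $m\mid|E(G_i')|$, hence $F$-decomposable by Theorem~\ref{thm:barat-thomassen} for forests, which is precisely a balanced $T_1,\dots,T_r$-decomposition. Assembling the contributions yields a balanced $T_1,\dots,T_r$-decomposition of $G$, and hence an $F$-decomposition. The main obstacle is this absorption step: one must reroute around $G_0$ without destroying the high edge-connectivity of the pieces and control the multiplicity bookkeeping at the same time, and it is here — not in the appeal to Theorem~\ref{thm:barat-thomassen}, nor in the numerical-semigroup arithmetic — that the bulk of the work lies.
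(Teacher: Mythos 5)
Your proposal and the paper both start from the same philosophical idea (peel $G$ apart along small cuts into highly edge-connected pieces, apply Theorem~\ref{thm:barat-thomassen} on the pieces, and fix up the arithmetic using coprimality), but the resemblance ends at that level: the paper's actual argument contains several mechanisms your sketch either omits or asserts incorrectly, and these are exactly where the theorem lives.

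First, your opening claim that a decomposition into the components $T_1,\dots,T_r$ of $F$ with equal multiplicities ``partitions into $q$ copies of $F$'' is false as stated. A copy of $F$ requires the $r$ chosen trees to be \emph{pairwise vertex-disjoint}; having $q$ edge-disjoint copies of each $T_i$ scattered through $G$ does not automatically permit grouping them into $q$ vertex-disjoint $r$-tuples. The paper devotes a nontrivial greedy argument to exactly this issue (the ``bad/good copies'' paragraph inside the proof of Theorem~\ref{thm:forest}), and to make that greedy work it does not directly balance the $r$ component-trees: it first constructs two auxiliary chains $T_1 = K_1^{\circ n+p_1}\circ\cdots\circ K_s^{\circ n+p_s}$ and $T_2 = K_1^{\circ m-(n+p_1)}\circ\cdots\circ K_s^{\circ m-(n+p_s)}$ whose edge-counts are $n|E(F)|+1$ and $(m-n)|E(F)|-1$ (using B\'ezout coefficients $p_i$), together with $T=F^{\circ r}$ for a large prime $r$. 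The point is that a \emph{pair} $T_1^i\cup T_2^i$ admits an $F$-decomposition, because the chain structure supplies many copies of each component $K_j$ in a controlled layout, and that is what makes the vertex-disjointness bookkeeping tractable. You have replaced this concrete construction with an unproved ``lattice of $\Lambda$-exchanges realizable locally'' claim, and for $r>2$ components steered simultaneously this is precisely the part that requires a genuine idea.

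Second, your claim that repeated small-cut peeling yields a sparse leftover with $\Delta(G_0)$ bounded in terms of $F$ does not hold: a vertex that stays on the large side of the cut through many iterations can accumulate an unbounded number of cut edges. The paper circumvents this entirely with the stub mechanism: cut edges are turned into indexed stubs on the surviving side, the degree of every vertex in $G_i$ is preserved exactly, and stubs are re-absorbed one index at a time into the reserve subgraph $R_i$ that Corollary~\ref{cor:split} carves out of each highly-connected piece. The minimum-degree bookkeeping you flag as ``the first genuine hurdle'' is indeed a hurdle, and your sketch does not clear it; the stub formalism and Corollary~\ref{cor:split} are what make it go through. Finally, the exact balancing at the end (equal numbers of $T_1$ and $T_2$) is done in the paper by a separate use of Observation~\ref{obs:bezout} inside the last piece, not by a general exchange argument. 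So while the broad outline is recognizable, the key technical devices (the chain construction, the stub graphs, the two-tree reduction via Lemma~\ref{lem:3tree}, and the vertex-disjointness greedy) are all missing, and at least two of your asserted intermediate facts are false as stated.
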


Note that the requirement that the forest is coprime is necessary. For instance, if every component of $F$ has even number of edges, a graph with a connected component that has an odd number of edges does not have any $F$-decomposition.

From Theorem~\ref{thm:forest}, we can easily derive, in a sense, a relaxation of Conjecture~\ref{conj:wrong}. In particular, for two trees $T_1$, $T_2$ with coprime numbers of edges, high minimum degree implies the existence of a $T_1,T_2$-decomposition.

\begin{corollary}\label{thm:twotrees}
For any two trees $T_1, T_2$ with coprime numbers of edges, there exists an integer $\delta=\delta(T_1,T_2)$ such that every graph with minimum degree at least $\delta$ has a $T_1,T_2$-decomposition.
\end{corollary}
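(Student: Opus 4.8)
The plan is to deduce Corollary~\ref{thm:twotrees} from Theorem~\ref{thm:forest} applied to the single forest $F_0 := T_1 \sqcup T_2$, the disjoint union of one copy of $T_1$ and one copy of $T_2$. Write $m_i := |E(T_i)|$ and assume both trees are proper (if $m_1=0$ or $m_2=0$ the statement is trivial: coprimality then forces the other tree to be a single edge, and every graph decomposes into single edges). Since $\gcd(m_1,m_2)=1$, the forest $F_0$ is a proper coprime forest, so Theorem~\ref{thm:forest} supplies a threshold $\delta_0 := \delta(F_0)$; because the hypothesis of that theorem is monotone in the minimum degree, we may and do assume $\delta_0 \ge |V(T_1)|$. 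I claim that $\delta(T_1,T_2) := \delta_0 + m_1(m_1+m_2)$ works.

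The one substantive ingredient is a divisibility correction. Given $G$ with minimum degree at least $\delta(T_1,T_2)$, put $M := |E(G)|$. Because $\gcd(m_1,\,m_1+m_2)=\gcd(m_1,m_2)=1$, the residue $m_1$ is invertible modulo $m_1+m_2 = |E(F_0)|$, so there is a unique $j \in \{0,1,\dots,m_1+m_2-1\}$ with $j\,m_1 \equiv M \pmod{m_1+m_2}$. I would then peel off, one at a time, $j$ pairwise edge-disjoint copies of $T_1$ from $G$: at each step fewer than $m_1(m_1+m_2)$ edges have been deleted in total, so every vertex has lost at most that many incident edges and the current graph still has minimum degree at least $\delta_0 \ge |V(T_1)|$, which is enough to embed one more copy of $T_1$ greedily. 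Let $G'$ be what remains. Then $\delta(G') \ge \delta_0$ and $|E(G')| = M - j\,m_1 \equiv 0 \pmod{m_1+m_2}$, i.e.\ $|E(F_0)|$ divides $|E(G')|$.

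Now Theorem~\ref{thm:forest} applies to $G'$ with $F=F_0$ and yields an $F_0$-decomposition of $G'$. Since every part is a copy of $T_1 \sqcup T_2$, splitting each part into its two connected components converts this into a $T_1,T_2$-decomposition of $G'$. Adding back the $j$ copies of $T_1$ removed earlier — which are edge-disjoint from one another and from $E(G')$ — extends it to a $T_1,T_2$-decomposition of $G$, as required.

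I do not expect a genuine obstacle here: the argument is a short reduction, and the only points needing care are (i) the choice of the residue $j$, which is exactly what coprimality of $m_1$ and $m_2$ buys, and (ii) taking $\delta(T_1,T_2)$ comfortably larger than $\delta_0$ so that deleting the bounded number of correcting copies of $T_1$ does not spoil the minimum-degree hypothesis required to invoke Theorem~\ref{thm:forest}.
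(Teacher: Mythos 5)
Your proof is correct and is essentially the derivation the paper alludes to when it says the corollary ``can be easily derived'' from Theorem~\ref{thm:forest} (the paper gives no explicit proof): apply Theorem~\ref{thm:forest} to $F_0=T_1\sqcup T_2$, which is coprime exactly because $\gcd(m_1,m_2)=1$, after a divisibility correction that peels off a bounded number of greedily embedded copies of $T_1$ (possible since $m_1$ is invertible modulo $m_1+m_2$ and the minimum-degree threshold was padded to absorb the peeled edges). The handling of the degenerate edgeless case, the degree bookkeeping, and the reassembly of the $F_0$-decomposition into a $T_1,T_2$-decomposition are all sound.
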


The core idea of the proof of Theorem~\ref{thm:forest} (which is used in Lemma~\ref{lem:3tree}) is to partition the host graph along edge-cuts of bounded size into vertex disjoint parts that are highly edge-connected. This is achieved by repeated applications of the following lemma.

\begin{lemma}\label{lem:cut}
Let $k$ be an integer and $G$ be a multigraph. Then, there is a cut $(A,B)$ in $G$ of order at most $2k$ such that $G[A]$ is $k$-edge-connected or has only one vertex.
\end{lemma}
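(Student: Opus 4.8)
The plan is to produce $A$ by a simple extremal argument and then to verify $k$-edge-connectedness by one counting estimate. For $X\subseteq V(G)$ write $d(X)$ for the number of edges of $G$ with exactly one endpoint in $X$, i.e.\ the order of the cut $(X,V(G)\setminus X)$. First I would observe that the family of nonempty sets $X$ with $d(X)\le 2k$ is nonempty, since $X=V(G)$ has $d(X)=0\le 2k$. So I would choose, among all such sets, one of minimum cardinality, call it $A$, and set $B=V(G)\setminus A$; then $(A,B)$ is a cut of order $d(A)\le 2k$.

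If $|A|=1$ the lemma holds, so assume $|A|\ge 2$ and suppose for contradiction that $G[A]$ is not $k$-edge-connected. Then there is a partition $A=A_1\cup A_2$ into nonempty sets with at most $k-1$ edges of $G$ between $A_1$ and $A_2$; write $e=e_G(A_1,A_2)\le k-1$. Every edge counted by $d(A_1)$ either joins $A_1$ to $A_2$ or leaves $A$, and likewise for $A_2$, which gives the identity $d(A_1)+d(A_2)=d(A)+2e\le 2k+2(k-1)=4k-2$. Hence one of the two parts, say $A_1$, satisfies $d(A_1)\le 2k-1\le 2k$. But $A_1$ is a nonempty proper subset of $A$, so $|A_1|<|A|$, contradicting the minimality of $A$. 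Therefore $G[A]$ is $k$-edge-connected, as required.

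The argument is short and I do not expect a genuine obstacle; the only points needing care are the defining property of $k$-edge-connectivity (every edge cut of $G[A]$ has size at least $k$, so its failure yields the claimed partition with fewer than $k$ crossing edges) and the bookkeeping identity $d(A_1)+d(A_2)=d(A)+2e_G(A_1,A_2)$, obtained by sorting the edges incident to $A_1\cup A_2$ into the three types $A_1$--$A_2$, $A_1$--$B$, $A_2$--$B$. The crux is simply combining the minimum-cardinality choice of $A$ with this identity, and the factor $2$ in the bound $2k$ is precisely what one loses when splitting the budget $4k-2$ between the two parts. If one prefers to avoid the degenerate cut $(V(G),\emptyset)$, one notes that when $A=V(G)$ is forced to be the minimal choice the same computation, now with no edges leaving $A$, shows that $G$ itself is $k$-edge-connected.
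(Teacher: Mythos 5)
Your proof is correct and takes essentially the same route as the paper: choose a set $A$ with $d(A)\le 2k$ that is extremal (the paper uses inclusion-wise minimality, you use minimum cardinality, which is equivalent here), and then use the identity $d(A_1)+d(A_2)=d(A)+2e(A_1,A_2)$ to show that any small cut inside $G[A]$ would force a smaller qualifying set, contradicting extremality. The only difference is cosmetic: the paper argues directly that every cut of $G[A]$ has order greater than $k$, while you argue by contradiction from the failure of $k$-edge-connectivity.
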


We then obtain decompositions of these parts using the result of~\cite{bib:BHLMT}. Finally, we construct a decomposition of the whole graph by combining decompositions of the parts.


\section{Disproving Conjecture~\ref{conj:wrong}}\label{sec:disprove}

In this section, we disprove Conjecture~\ref{conj:wrong}. We show that it does not hold even for trees of maximum degree three.

\begin{figure}
\begin{center}
\includegraphics[scale=1]{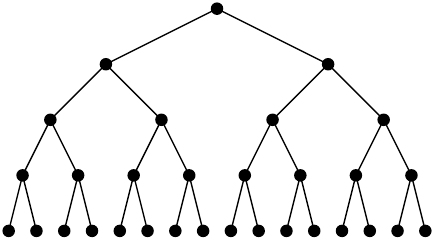}
\caption{The tree $T_4$.}
\label{fig:tree}
\end{center}
\end{figure}

Assume that Conjecture~\ref{conj:wrong} holds for some function $f$. Let $T_k$ be the complete binary tree of depth $k$ (see Figure~\ref{fig:tree} for an example). The maximum degree of $T_k$ is three and the number of edges of $T_k$ is $n_k=2^{k+1}-2$ for every $k$. Let $\bbT_k$ be the set of possible numbers of edges contained in a component of $T_k\setminus e$ for some edge $e$ of $T_k$.
Observe that the components of $T_k\setminus e$ have $2^i-2$ edges and $n_k-(2^i-1)=2^{k+1}-2^i-1$ edges for some $i\in[k]$, for every edge $e$. Thus, $\bbT_k=\{2^i-2|i\in[k]\}\cup \{2^{k+1}-2^i-1|i\in[k]\}$ and then $|\bbT_k|\leq 2k$.

It follows that the sum $\sum_{i=1}^{f(3)} t_i$, where $t_i\in \bbT_k$ for every $i\in [f(3)]$, can attain at most $(2k)^{f(3)}$ different values. Therefore, there exists $k_0$ such that $(2k)^{f(3)}<n_k$ for every $k>k_0$. 

Fix $k>k_0$. Then, there exists $m\in [n_k]$ such that $m\neq  \sum_{i=1}^{f(3)} t_i \mod n_k$ for any choice of values of $t_i\in \bbT_k$.

Let $G_1$ and $G_2$ be $f(3)$-edge-connected graphs with minimum degree at least $f(|E(T_k)|)$ such that the number of edges of $G_1$ is congruent to $m$ modulo $n_k$, the number of edges of $G_2$ is congruent to $n_k-f(3)-m$ modulo $n_k$ and there are sets $S_1\subseteq V(G_1)$, $S_2\subseteq V(G_2)$ of size $f(3)$ such that the distance between every two distinct vertices of $S_i$ in $G_i$, $i=1,2$, is greater than $2k$ (where the distance is the length of the shortest path between the two vertices). See Figure~\ref{fig:star} for an example of a construction of $G_1$ with these properties, $G_2$ can be constructed in an analogous way. 

\begin{figure}
\begin{center}
\includegraphics[scale=1]{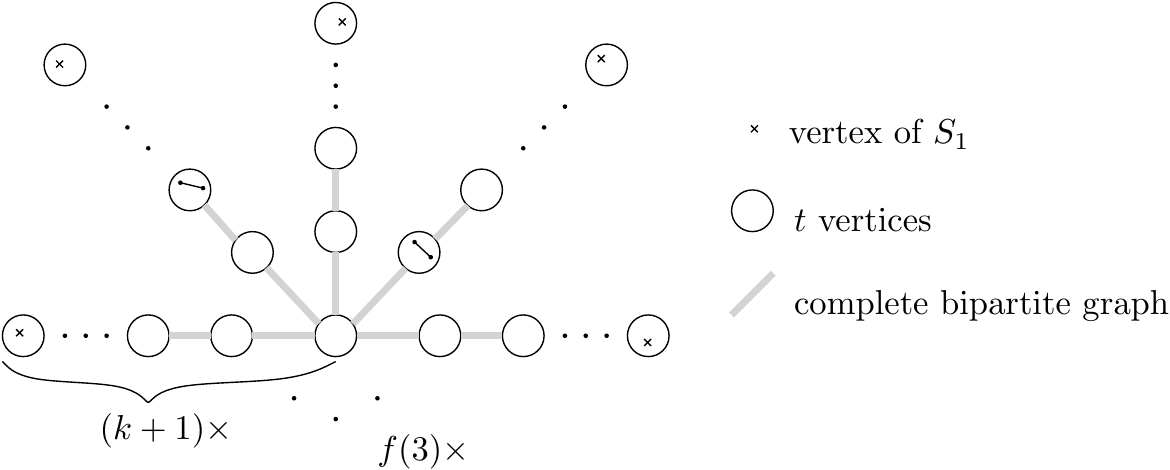}
\caption{A graph $G_1$ with the desired properties can be easily constructed by taking a star with $f(3)$ edges, subdividing each edge $k$ times, replacing each vertex by $t=\max(f(|E(T_k)|),n_k)$ vertices and each edge by a complete bipartite graph and finally, adding appropriate number of additional edges between the vertices corresponding to the same vertex of the subdivided star, so that the number of edges of the graph is congruent to $m$ modulo $n_k$. Taking vertices of $S_1$ as in the figure yields a set of size $f(3)$ with distance at least $2k+2$ between every two vertices.}
\label{fig:star}
\end{center}
\end{figure}

Let $G$ be a graph obtained from the disjoint union of $G_1$ and $G_2$ by adding a matching $M$ of size $f(3)$ between the vertices of $S_1$ and $S_2$. Then, $G$ is $f(3)$-edge-connected, of minimum degree at least $f(|E(T_k)|)$ and with number of edges divisible by $|E(T_k)|$. Assume that there exists a $T_k$-decomposition $\TT$ of $G$. Since the distance between any two vertices in $S_1$ and any two vertices in $S_2$ is greater than the distance between any two vertices in $T$, every copy of $T_k$ in $\TT$ contains at most one vertex of $S_1$ and $S_2$ and therefore at most one edge of $M$. Note that each copy of $T$ with an edge in $M$ contains $t_i\in \bbT_k$ edges of $G_1$. Therefore, the number of edges of $G_1$ is $cn_k+\sum_{i=1}^{f(3)} t_i$, where $c$ is an integer and $t_i\in \bbT_k$ for every $i\in [f(3)]$. This yields a contradiction with the choice of the number of edges of~$G_1$.

\section{Decomposition into coprime trees}\label{sec:twotrees}
In this section, we prove Theorem~\ref{thm:forest} after introducing some tools.

Let $\GG$ be a set of graphs. We say that $G$ has a $\GG$-decomposition if its edge set can be partitioned into edge disjoint copies of graphs in $\GG$. Decompositions are transitive in the following sense.

\begin{observation}\label{obs:trans}
Let $G$ be a graph and let $\GG$ and $\HH$ be sets of graphs. If the edge set of $G$ can be partitioned in edge-disjoint subgraphs such that each of them has an $\HH$-decomposition, then $G$ has an $\HH$-decomposition. In particular, if every graph in $\GG$ has an $\HH$-decomposition and $G$ has a $\GG$-decomposition, $G$ has an $\HH$-decomposition.
\end{observation}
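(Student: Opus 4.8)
The plan is to unwind the definitions and observe that a partition of a partition is a partition. First I would fix the hypothesized decomposition of $E(G)$ into edge-disjoint subgraphs $G_1,\dots,G_k$, writing $E(G)=E_1\cup\dots\cup E_k$ with $E_i=E(G_i)$ and the $E_i$ pairwise disjoint. By assumption each $G_i$ has an $\HH$-decomposition, so each $E_i$ is itself partitioned as $E_i=E_{i,1}\cup\dots\cup E_{i,k_i}$, where every block $E_{i,j}$ carries a copy of some member of $\HH$. I would then check that the collection $\{E_{i,j}: i\in[k],\, j\in[k_i]\}$ is a partition of $E(G)$: the blocks are pairwise disjoint, since blocks with different first index lie in disjoint sets $E_i$ and blocks with the same first index are disjoint by construction; and they cover $E(G)$, since they cover each $E_i$ and the $E_i$ cover $E(G)$. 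As each block is a copy of a graph in $\HH$, this collection is an $\HH$-decomposition of $G$.

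For the ``in particular'' clause, I would first record the (immediate) fact that admitting an $\HH$-decomposition is invariant under graph isomorphism: if $H\cong H'$ and $H$ has an $\HH$-decomposition, transporting the decomposition through the isomorphism yields one of $H'$. Now suppose $G$ has a $\GG$-decomposition $\{E_i\}_{i\in[k]}$, and let $G_i$ denote the subgraph of $G$ with edge set $E_i$ (and the vertices incident to those edges). Each $G_i$ is isomorphic to some $H\in\GG$, which by hypothesis has an $\HH$-decomposition, hence so does $G_i$ by the isomorphism-invariance just noted. Thus $E(G)$ is partitioned into the edge-disjoint subgraphs $G_1,\dots,G_k$, each $\HH$-decomposable, and the first part of the statement applies to give an $\HH$-decomposition of $G$.

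I expect no genuine obstacle here, as the argument is a direct manipulation of the definition of decomposition; the only point that deserves to be stated explicitly rather than left implicit is the isomorphism-invariance of $\HH$-decomposability, since it is precisely what allows the ``copies'' appearing in a $\GG$-decomposition of $G$ to inherit $\HH$-decompositions from the abstract graphs of $\GG$.
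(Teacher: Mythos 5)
Your proof is correct and is precisely the straightforward ``partition of a partition'' argument that the paper treats as self-evident (the paper gives no proof for this observation). Nothing to add.
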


Given a sequence of vertex disjoint proper trees $T_1,\ldots, T_k$, we define a {\em chain} $T_1\circ T_2\circ \cdots \circ T_k$ as a graph obtained from $T_1,\ldots, T_k$ by choosing two distinct leaves $u_i$ and $v_i$ in each $T_i$ and identifying $u_i$ and $v_{i+1}$ for $i\in [k-1]$. Note that a chain is a tree.

For a proper tree $T$ and an integer $k$, we define a {\em $k$-chain of $T$} to be a chain $T_1\circ T_2 \circ \cdots \circ T_k$, where each $T_i$ is isomorphic to $T$. We denote it by $\chain{T}{k}$. Similarly, we define a {\em $k$-chain} $\chain{F}{k}$ of a proper forest $F$ with connected components $T_1,\ldots, T_m$ as the tree $\chain{T_1}{k}\circ \cdots \circ \chain{T_m}{k}$. 

\begin{observation}\label{obs:chain-dec}
Let  $F$ be a proper forest and $k\geq 2$. Then $\chain{F}{k}$ has an $F$-decomposition.
\end{observation}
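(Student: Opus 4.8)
The plan is to work directly with the $km$ building blocks of $\chain{F}{k}$. Let $T_1,\ldots,T_m$ be the components of $F$, and recall that $\chain{F}{k}$ is assembled from copies $T_i^{(j)}$ (for $i\in[m]$, $j\in[k]$) of the $T_i$: first each sequence $T_i^{(1)},\ldots,T_i^{(k)}$ is chained into $\chain{T_i}{k}$, and then $\chain{T_1}{k},\ldots,\chain{T_m}{k}$ is chained into $\chain{F}{k}$. Since forming a chain only identifies leaves and never merges edges, the edge sets $E(T_i^{(j)})$ are pairwise disjoint and partition $E(\chain{F}{k})$. Hence it suffices to distribute the blocks $T_i^{(j)}$ into $k$ groups, each group containing exactly one block of every type $T_1,\ldots,T_m$ and consisting of pairwise vertex-disjoint blocks: then the subgraph of $\chain{F}{k}$ spanned by the edges of a group is a disjoint union of copies of $T_1,\ldots,T_m$ (using that a proper tree has no isolated vertex, so its edge set determines it), i.e.\ a copy of $F$, and the $k$ groups form the desired $F$-decomposition.

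Next I would record which blocks intersect. Inside a single $\chain{T_i}{k}$, the blocks $T_i^{(j)}$ and $T_i^{(j')}$ share a vertex only if $|j-j'|\le1$. Among the chains $\chain{T_1}{k},\ldots,\chain{T_m}{k}$, only consecutive ones meet, and $\chain{T_i}{k}$ meets $\chain{T_{i+1}}{k}$ in exactly the single vertex along which they were glued; that vertex lies in a unique block $T_i^{(p_i)}$ of $\chain{T_i}{k}$ and in a unique block $T_{i+1}^{(q_{i+1})}$ of $\chain{T_{i+1}}{k}$. Consequently $T_i^{(j)}$ and $T_{i+1}^{(j')}$ are vertex-disjoint unless $(j,j')=(p_i,q_{i+1})$, while blocks coming from non-consecutive chains are always vertex-disjoint.

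Now I would build the groups. In the natural assembly, where one always glues the last block of one chain to the first block of the next, block $T_i^{(j)}$ occupies position $(i-1)k+j$ in the resulting chain of $km$ blocks, so the group $\{T_1^{(j)},\ldots,T_m^{(j)}\}$ consists of blocks whose positions differ by at least $k\ge2$, hence pairwise vertex-disjoint, and we are done. For arbitrary leaf choices I would instead pick permutations $\sigma_1,\ldots,\sigma_m$ of $[k]$ with $\sigma_i(p_i)\ne\sigma_{i+1}(q_{i+1})$ for all $i\in[m-1]$; this is possible because $k\ge2$: take $\sigma_1$ to be the identity and, once $\sigma_i$ is chosen, choose $\sigma_{i+1}$ so that $\sigma_{i+1}(q_{i+1})$ avoids the single forbidden value $\sigma_i(p_i)$. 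For $c\in[k]$, let the $c$-th group be $\{T_i^{(\sigma_i^{-1}(c))}:i\in[m]\}$. Each block $T_i^{(j)}$ lies in exactly one group, namely $c=\sigma_i(j)$, so the groups partition the blocks; and within the $c$-th group the blocks $T_i^{(\sigma_i^{-1}(c))}$ and $T_{i+1}^{(\sigma_{i+1}^{-1}(c))}$ are vertex-disjoint, since $\sigma_i^{-1}(c)=p_i$ and $\sigma_{i+1}^{-1}(c)=q_{i+1}$ cannot both hold (that would force $\sigma_i(p_i)=c=\sigma_{i+1}(q_{i+1})$). So each group is a vertex-disjoint family of copies of $T_1,\ldots,T_m$, its edges span a copy of $F$, and we obtain an $F$-decomposition of $\chain{F}{k}$ into $k$ copies.

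The only genuinely delicate point is the bookkeeping in the previous paragraph, i.e.\ certifying that the groups really are vertex-disjoint: with the natural assembly this is immediate from the position spacing, and the permutation argument is the clean way to see it for arbitrary gluings (and it is exactly here that $k\ge2$ is used, as it must be, since $\chain{F}{1}$ is connected and so admits no $F$-decomposition when $m\ge2$). The remaining ingredients --- that chaining only identifies leaves, that proper trees have no isolated vertices, and the standard path-like structure of a chain of trees --- are routine.
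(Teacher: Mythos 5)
The paper states Observation~\ref{obs:chain-dec} without proof, so there is no ``intended argument'' to compare against; your proposal is, as far as I can tell, correct and complete. You correctly reduce the claim to partitioning the $km$ constituent blocks $T_i^{(j)}$ into $k$ transversals (one block of each component type) that are pairwise vertex-disjoint, and you correctly note that the only adjacencies that can spoil this are between consecutive blocks in the same chain and between the single pair of blocks in consecutive chains $\chain{T_i}{k}$, $\chain{T_{i+1}}{k}$ that share the glued leaf. The permutation argument ($\sigma_1=\mathrm{id}$, then choose $\sigma_{i+1}$ with $\sigma_{i+1}(q_{i+1})\neq\sigma_i(p_i)$, possible since $k\ge 2$) is a clean way to handle the fact that the chain operation $\circ$ allows arbitrary leaf choices, so the gluing vertices need not sit in the ``endpoint'' blocks; your remark that the trivial ``natural assembly'' groups $\{T_1^{(j)},\ldots,T_m^{(j)}\}$ already work when the gluing is end-to-end is also correct. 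One small stylistic point: the within-chain observation that $T_i^{(j)}$ and $T_i^{(j')}$ meet only when $|j-j'|\le 1$ isn't actually invoked, since each group contains exactly one block per chain; you could drop it.
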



Next, we argue that Theorem~\ref{thm:barat-thomassen} can be easily extended to proper forests. In particular, given a fixed proper forest $F$, if $G$ has number of edges divisible by $|E(F)|$ and is $k_{\chain{F}{2}}+|E(F)|$-edge-connected, where $k_{\chain{F}{2}}$ is as in Theorem~\ref{thm:barat-thomassen}, then $G$ has an $F$-decomposition. Indeed, from Theorem~\ref{thm:barat-thomassen} it follows that if $|E(G)|$ is divisible by $|E(\chain{F}{2})|=2|E(F)|$, then $G$ has an $\chain{F}{2}$-decomposition. The existence of an $F$-decomposition follows from Observation~\ref{obs:trans}.

If $|E(G)|$ is divisible by $|E(F)|$ but not by $|E(\chain{F}{2})|$, we can remove any  copy of $F$ to make the number of edges of $G$ divisible by $|E(\chain{F}{2})|$, decreasing edge-connectivity by at most $|E(F)|$ (to at least $k_{\chain{F}{2}}$) and then argue as before.

For the proof of Theorem~\ref{thm:forest}, we need the following lemma, which we prove in Section~\ref{sec:3tree}.

\begin{lemma}\label{lem:3tree}
Let $T$, $T_1$ and $T_2$ be trees such that the number of edges of $T$ is coprime with both the number of edges of $T_1$ and the number of edges of $T_2$. Then, there exists an integer $\delta=\delta(T,T_1,T_2)$ such that every graph with minimum degree at least $\delta$ and number of edges divisible by the greatest common divisor of $|E(T)|$ and $|E(T_1)|+|E(T_2)|$ has a $\{T,T_1,T_2\}$-decomposition that contains the same number of copies of $T_1$ and $T_2$.
\end{lemma}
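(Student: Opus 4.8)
The plan is to reduce the statement to the highly edge-connected case — which is handled by Theorem~\ref{thm:barat-thomassen} — and to reach that case by cutting $G$ apart along small edge-cuts via Lemma~\ref{lem:cut}. Throughout, write $t=|E(T)|$, $a=|E(T_1)|$, $b=|E(T_2)|$, $g=\gcd(t,a+b)$ and $w=a+b+t$. Since $g\mid t$ and $\gcd(t,a)=\gcd(t,b)=1$, one has $\gcd(a,g)=\gcd(b,g)=1$ and $\gcd(t,w)=g$; in particular $a$ is invertible modulo $g$, the multiples of $t$ realize exactly the multiples of $g$ modulo $w$, and every multiple of $g$ beyond a constant lies in the numerical semigroup generated by $t$ and $a+b$. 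Let $W:=T_1\circ T_2\circ T$, a chain with $|E(W)|=w$ that (by Observations~\ref{obs:trans} and~\ref{obs:chain-dec}) has a $\{T,T_1,T_2\}$-decomposition using exactly one copy of each, and let $K:=k_W$ be the constant of Theorem~\ref{thm:barat-thomassen} for $W$.

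First I would establish the highly connected case: there is a constant $c_0=c_0(T,T_1,T_2)$ such that every $c_0$-edge-connected graph $H$ with enough edges has, for every integer $s$ with $|s|\le 4g$ and $s\equiv a^{-1}|E(H)|\pmod g$, a $\{T,T_1,T_2\}$-decomposition with $\#T_1-\#T_2=s$. To see this, remove from $H$, copy by copy, $|s|$ copies of $T_1$ (if $s\ge0$) or of $T_2$ (if $s<0$), then $\alpha\in[0,w/g)$ further copies of $T$ chosen so that the number of remaining edges becomes divisible by $w$ — possible because that number is $\equiv0\pmod g$ and multiples of $t$ hit every multiple of $g$ modulo $w$. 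Removing a copy of a tree lowers edge-connectivity by at most $\max(t,a,b)$ and is always feasible when the minimum degree is large, so if $c_0\ge K+(4g+w)\max(t,a,b)$ the leftover graph is $K$-edge-connected, hence $W$-decomposable by Theorem~\ref{thm:barat-thomassen}; splitting each copy of $W$ into $\{T_1,T_2,T\}$ and adding back the removed copies gives the claimed decomposition, with imbalance exactly $s$.

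Next, fix a constant $k$, large in terms of $c_0,t,a,b$ (large enough that $\lfloor k/2\rfloor$ exceeds $c_0$ by enough to host a bounded number of edge-disjoint copies of $T$; there is no circularity, since all quantities in play are then constants), and take $\delta>2k$. Repeatedly applying Lemma~\ref{lem:cut} with parameter $k$ and contracting each extracted piece as it appears — so the remaining free vertices keep degree $\ge\delta>2k$, which rules out single-vertex pieces and ensures progress — yields a partition $V(G)=V_1\sqcup\cdots\sqcup V_r$ such that each $G[V_i]$ is $k$-edge-connected and at most $2k$ edges of $G$ join $V_i$ to $\bigcup_{j>i}V_j$; calling this latter set $C_i$, every edge of $G$ is internal to exactly one $V_i$ or lies in exactly one $C_i$. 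For each $i$: reserve $c_0+1$ edge-disjoint spanning trees inside the $k$-edge-connected graph $G[V_i]$ (their union is $c_0$-edge-connected); in what is left of $G[V_i]$, still highly edge-connected, greedily embed for each $e\in C_i$ a copy of $T$ in which $e$ is a pendant edge — its far endpoint lies in a later piece and is used only as a leaf, so the later pieces stay untouched — and the other $t-1$ edges lie in $G[V_i]$, keeping all these copies edge-disjoint (feasible since only boundedly many edges are used and the embeddings can avoid the few over-used vertices). After deleting these copies, $G[V_i]$ still contains the reserved spanning trees, hence is $c_0$-edge-connected; decompose it by the highly connected case with a suitable imbalance $s_i$.

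Finally, one chooses the $s_i$. Taking $s_i\in[0,g)$ with $s_i\equiv a^{-1}\cdot(\text{number of edges of }G[V_i]\text{ left for the last step})\pmod g$, and using that $|E(G)|\equiv0\pmod g$ while every cut edge is absorbed by a copy of $T$ (contributing $t\equiv0\pmod g$ to the edge count), the sum $\sum_i s_i$ is a nonnegative multiple of $g$; I would then lower some of the $s_i$ by multiples of $g$ so that they sum to $0$, which is possible for $\delta$ large because each piece is then big enough to realize a bounded downward shift of its imbalance and there are enough pieces to spread the correction over. Gluing the piece decompositions gives a $\{T,T_1,T_2\}$-decomposition of $G$ with $\#T_1=\#T_2$. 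The step I expect to be the real obstacle is the cutting-and-absorbing one: one must produce genuinely $k$-edge-connected pieces, each with at most $2k$ cut edges, and yet absorb those up to $2k$ cut edges per piece without destroying the edge-connectivity that Theorem~\ref{thm:barat-thomassen} needs — reserving a block of spanning trees is exactly the device that stops these two requirements from colliding.
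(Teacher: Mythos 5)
Your overall architecture — peel the graph into highly edge-connected pieces via repeated application of Lemma~\ref{lem:cut}, absorb the boundary edges by bounded-size copies of $T$, decompose each piece with controlled imbalance, and fix the global count at the end — is essentially the same as the paper's, and the two outer phases (the highly-connected case via removal of boundedly many copies and decomposition into $W$-chains; the numerical accounting of $s_i\pmod g$) are sound. The genuine gap is in the middle, in the claim that ``contracting each extracted piece as it appears'' yields a partition of $V(G)$ into $k$-edge-connected sets with at most $2k$ boundary edges each.

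Here is why that step fails as stated. After you contract the first extracted piece $A_1$ to a single vertex $c_1$, that vertex has degree at most $2k$ in the contracted multigraph. When you next apply Lemma~\ref{lem:cut}, nothing forces the new side $A_2$ to avoid $c_1$: you may get $A_2=\{c_1\}$, which makes no progress, or you may get a $k$-edge-connected $A_2$ that contains $c_1$ together with some free vertices, and then neither $A_2$ nor $A_2\setminus\{c_1\}$ (nor the preimage under un-contraction) is the $k$-edge-connected set of \emph{original} vertices you need. If instead you \emph{delete} the pieces rather than contract them, the degree of a surviving vertex can drop by up to $2k$ at each step with no uniform bound over the whole process, so you cannot rule out that Lemma~\ref{lem:cut} eventually returns a single-vertex side. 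And a single-vertex piece is fatal for your absorption step: it has no internal edges in which to place the $t-1$ non-boundary edges of the absorbing copies of $T$.

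This is precisely what the paper's stub mechanism is for. A stub is a size-one hyperedge left on a vertex when the other end of one of its edges is cut away; stubs count toward the degree (so the minimum-degree invariant is exact, not merely approximate) but not toward cuts (so Lemma~\ref{lem:cut} still produces cuts of at most $2k$ \emph{real} edges). With this bookkeeping, single-vertex pieces are allowed rather than forbidden: such a piece may carry many stubs, and a stub together with its vertex is a trivial embedding of any proper tree, so it is decomposed ``for free'' (Observation~\ref{obs:stub-dec}) and the absorbed boundary copy is built on the other side of the cut. Your proof needs either this device or a genuine argument that, with your choice of $\delta$ and $k$, every piece produced by the cutting process has more than one vertex and stays $k$-edge-connected after the boundary edges of all \emph{previous} cuts have been deleted; the contraction remark does not supply such an argument. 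A secondary (and minor) remark: the paper also replaces your ad hoc ``reserve $c_0+1$ spanning trees'' step by the quantitatively controlled Corollary~\ref{cor:split}, which guarantees the absorbing part $R_i$ has min degree $\ge 4km$ while the leftover stays $K$-edge-connected; your version would work too once the minimum degree inside each piece is controlled, but that again hinges on the step above.
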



\begin{proof}[Proof of Theorem~\ref{thm:forest}]
Let $K_1,\ldots, K_s$ be the connected components of the coprime forest $F$. By B\'ezout's lemma, there exist integers $p_1,\ldots, p_s$ such that $\sum_{i=1}^{s} p_i\ec{K_i}=1$. 
We define $n:=2|V(F)|-\min \{0,p_1,\ldots, p_s\}$ and $m:=2|V(F)|+n+\max \{0,p_1,\ldots, p_s\}$. Let $T_1$ be $K_1^{\circ n+p_1}\circ \cdots \circ K_s^{\circ n+p_s}$ and $T_2$ be $K_1^{\circ m-(n+p_1)}\circ \cdots \circ K_s^{\circ m-(n+p_s)}$. Then, $\ec{T_1}=n\ec{F}+1$, i.e. the number of edges of $T_1$ is coprime with the number of edges of $F$. 
Moreover, $\ec{T_2}=m\ec{F}-\ec{T_1}=(m-n)\ec{F}-1$, thus the number of edges of $T_2$ is also coprime with number of edges of $F$. Let $r$ be a prime number greater than $\ec{T_1}$ and $\ec{T_2}$ and define $T$ to be $F^{\circ r}$. Then, $\ec{T}$ is coprime with both $\ec{T_1}$ and $\ec{T_2}$.

Then, by Lemma~\ref{lem:3tree}, $G$ has a $\{T,T_1,T_2\}$-decomposition $\TT$. Let $G'$ be a subgraph of $G$ formed by the copies of $T_1$ and $T_2$ in $\TT$. Since the graph $G\setminus E(G')$ has a $T$-decomposition by Observation~\ref{obs:trans} and~\ref{obs:chain-dec}, it also has an $F$-decomposition. Thus, to show that $G$ has an $F$-decomposition, it suffices to show that $G'$ has an $F$-decomposition.

Note that $G'$ has a $\{T_1,T_2\}$-decomposition into edge disjoint copies $T_1^1,\ldots, T_1^{\ell}$
 of $T_1$ and edge disjoint copies $T_2^1,\ldots, T_2^{\ell}$ of $T_2$ for some integer 
 $\ell$. Let $H_i=T_1^i\cup T_2^i$. We show that $H_i$ has an $F$-decomposition for every 
 $i\in [\ell]$ and thus, by Observation~\ref{obs:trans}, $G'$ has an $F$-decomposition. By construction,  $T_1^i$ has a $(K_1,\ldots, K_s)$-decomposition $\TT_1$ in which each $K_j$ appears $n+p_j$-times. 
 Similarly, $T_2^i$ has a $(K_1,\ldots, K_s)$-decomposition $\TT_2$ in which each $K_j$ appears $m-(n+p_j)$-times. 
 Note that  $n+p_j$ and $m-(n+p_j)$ are at least $2|V(F)|$ for every $j\in[s]$. We first construct $m-4|V(F)|$ copies of $F$ in $H_i$ combining trees from $\TT_1$ and $\TT_2$ in such a way that at the end of the process, 
 there will be exactly $2|V(F)|$ unused copies of each $K_i$ in each of $\TT_1$ and $\TT_2$.
  We  construct one copy of $F$ at a time by greedily picking copies of $K_1, \ldots, K_s$ one by one, always choosing it from that of $\TT_1$ and $\TT_2$ which contains the greatest number of unused copies of the component (in case of equality we choose arbitrarily).
   We argue that it is always possible to pick a copy which is vertex disjoint from all the previously picked components in the currently constructed copy of $F$. Assume that we need to pick a copy of $K_j$ and assume that $\TT_1$ contains greater number of unused copies of $K_j$ - the argument for $\TT_2$ is analogous.

The total number of copies of $K_j$ in $\TT_{1}$ and $\TT_{2}$ is $m$ and therefore after using less than $m-4|V(F)|$ copies of $K_j$, $\TT_1$ contains more than $2|V(F)|$ unused copies of $K_j$. By construction, each copy of $K_k$, $k\neq j$ in $\TT_{1}$ or $\TT_{2}$ intersects at most $2|V(K_k)|$ copies of $K_j$ in $\TT_1$. Therefore, the already chosen components of the currently constructed copy of $F$ intersect at most $2|V(F)|$ copies of $K_j$ in $\TT_1$, so there exist at least one copy of $K_j$ which is vertex disjoint. Note that after constructing $m-4|V(F)|$ copies of $F$ in this way, both $\TT_1$ and $\TT_2$ contain $2|V(F)|$ unused copies of each $K_j$, $j\in[s]$. We denote these collections of unused copies by $\TT'_1$ and $\TT'_2$ respectively and argue that $\TT'_1$ can be partitioned into copies of $F$. Then, $\TT'_2$ can be partitioned into copies of $F$ by analogous arguments.

We call a copy of $K_i$, $i\in [s]$, in $\TT'_1$ {\em bad} if it intersects a copy of $K_j$ in $\TT'_1$ with $j\neq i$. We call all other copies in $\TT'_1$ {\em good}.
Observe that by construction, there are at most $2s$ bad copies in $\TT'_1$.  Any collection of copies in $\TT'_1$ containing one copy of $K_i$ for each $i\in [s]$ such that at most one of the copies is bad forms a copy of the forest $F$ (i.e., the copies in the collection are pairwise disjoint). Thus, it is possible to construct copies of $F$ greedily by picking one bad copy and good copies of the remaining components of $F$. Since $\TT'_1$ contains $2|V(F)|$ copies of each $K_i$, $i\in [s]$  and $2s<2|V(F)|$, there are enough good copies of each component of $F$ for repeatedly creating copies of $F$ in this way, until all the bad copies in $\TT'_1$ are used. Once all the bad copies are used, the remaining good copies in $\TT'_1$ can be arbitrarily partitioned into copies of $F$.


\end{proof}

\section{Proof of Lemma~\ref{lem:3tree}}\label{sec:3tree}

We start by showing that sufficiently highly edge-connected graph can be decomposed into copies of two trees with coprime numbers of edges.

\begin{lemma}\label{lem:conn-twotrees}
Let $T_1, T_2$ be trees with coprime numbers of edges. Then, there exists an integer $K=K(T_1,T_2)$ such that every $K$-edge-connected graph has a $\{T_1,T_2\}$-decomposition with less than $|E(T_1)|$ copies of $T_2$.
\end{lemma}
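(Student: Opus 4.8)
The plan is to reduce the statement to the Bar\'at--Thomassen theorem (Theorem~\ref{thm:barat-thomassen}) applied to $T_1$ alone. Write $t_1:=|E(T_1)|$ and $t_2:=|E(T_2)|$, so that $\gcd(t_1,t_2)=1$, and let $k_{T_1}$ be the constant supplied by Theorem~\ref{thm:barat-thomassen} for $T_1$. I would set $K:=t_1t_2+k_{T_1}$ and let $G$ be an arbitrary $K$-edge-connected graph. The idea is to peel off a small, tightly controlled collection of copies of $T_2$ whose only role is to correct the value of $|E(G)|$ modulo $t_1$, and then to invoke Theorem~\ref{thm:barat-thomassen} to decompose what is left; note in particular that no divisibility hypothesis on $|E(G)|$ is needed, since coprimality already provides enough flexibility.

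First I would fix how many copies of $T_2$ to use. Since $t_2$ is invertible modulo $t_1$, there is a unique $b\in\{0,1,\dots,t_1-1\}$ with $b\,t_2\equiv|E(G)|\pmod{t_1}$. The next step is to find $b$ pairwise edge-disjoint copies of $T_2$ in $G$, one at a time, using the elementary fact that a tree on $v$ vertices embeds into any graph of minimum degree at least $v-1$ (order the vertices of the tree so that each one after the first has a unique earlier neighbour, and embed them in this order, always sending the current vertex to an unused neighbour of the image of its earlier neighbour). Concretely: after removing $j<b$ copies we have deleted $j\,t_2<b\,t_2\le(t_1-1)t_2<K$ edges, so the remaining graph is still $(K-j\,t_2)$-edge-connected, hence of minimum degree at least $K-j\,t_2\ge t_2+k_{T_1}\ge t_2=|V(T_2)|-1$; therefore the next copy can always be embedded.

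Let $G'$ be obtained from $G$ by deleting the edges of these $b$ copies of $T_2$. Then $|E(G')|=|E(G)|-b\,t_2\equiv0\pmod{t_1}$, and since only $b\,t_2<K$ edges were removed, $G'$ is $(K-b\,t_2)$-edge-connected, in particular at least $k_{T_1}$-edge-connected. By Theorem~\ref{thm:barat-thomassen}, $G'$ has a $T_1$-decomposition; combining it with the $b$ removed copies of $T_2$ gives a $\{T_1,T_2\}$-decomposition of $G$ with exactly $b<t_1=|E(T_1)|$ copies of $T_2$, as required.

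There is no serious obstacle once Theorem~\ref{thm:barat-thomassen} is in hand; the one point that needs care is the edge-connectivity bookkeeping. Deleting the copies of $T_2$ can decrease edge-connectivity, but only by the number of deleted edges, so one must keep the number of removed copies bounded away from $t_1$ and build an additive cushion (here $t_1t_2$) into $K$ in order to stay above $k_{T_1}$ throughout. This is exactly where coprimality of $t_1$ and $t_2$ is used: it guarantees that the needed residue class modulo $t_1$ can be hit with fewer than $t_1$ copies of $T_2$. (A more economical choice $K:=(t_1-1)t_2+k_{T_1}$ would already suffice; I would keep the cleaner $K:=t_1t_2+k_{T_1}$.)
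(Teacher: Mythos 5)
Your proof is correct and follows essentially the same approach as the paper's: set $K=|E(T_1)||E(T_2)|+k_{T_1}$, greedily remove fewer than $|E(T_1)|$ edge-disjoint copies of $T_2$ to fix divisibility modulo $|E(T_1)|$, and apply Theorem~\ref{thm:barat-thomassen} to what remains. You simply spell out the greedy embedding step and the edge-connectivity bookkeeping in more detail than the paper does.
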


\begin{proof}
Let $m_1$, $m_2$ be numbers of edges of $T_1$, $T_2$, respectively, and let $k_{T_1}$ be as in Theorem~\ref{thm:barat-thomassen} and let $K=k_{T_1}+m_1m_2$. Let $G$ be a $K$-edge-connected graph and let $n$ be the smallest non-negative integer such that $m_1|(|E(G)|-nm_2)$. Since $m_1$ and $m_2$ are coprime, such $n$ exists and is smaller than $m_1$ by B\'{e}zout's Lemma.

By the greedy algorithm, it is possible to find a collection $\TT$ of $n$ edge-disjoint copies of $T_2$ in $G$. Then, $G\setminus E(\TT)$ is a $k_{T_1}$-edge-connected graph with number of edges divisible by $m_1$. The result follows from Theorem~\ref{thm:barat-thomassen}.

\end{proof}

For the purpose of the proof of Lemma~\ref{lem:3tree}, we extend the definition of a graph by allowing hyperedges of size one, which we call {\em stubs} and we call the resulting object a {\em stub graph}. Each vertex of a stub graph can be incident with arbitrarily many stubs and the {\em degree} of a vertex is the number of edges and stubs incident with it. Moreover, we assign a positive integer $i_s$ to each stub $s$ and we call $i_s$ the {\em index of the stub} $s$. \new{Intuitively, a stub can be viewed as a remainder of an edge after  one of its endvertices has been removed from the graph. The index of the stub then contains some information about the removed endvertex.}

We write $E(G)$ to denote the set consisting of the edges and the stubs of a stub graph $G$ and we call it the {\em edge set} of $G$, but we do not refer to stubs as edges otherwise. 
We denote the set of stubs of index $i$ in a stub graph $G$ by $S_i(G)$. A {\em subgraph} of a stub graph is also a stub graph and we say that two subgraphs are edge-disjoint if their edge sets are disjoint, i.e., they do not share any edge or stub.

Let $\TT$ be a set of trees (without stubs). We extend the definition of $\TT$-decomposition for graphs to stub graphs. Informally, in a $\TT$-decomposition of a stub graph, a stub incident with a vertex $v$ plays the role of a subtree of $T\in \TT$, such that the vertex $v$ is a leaf of this subtree.

A {\em twig} is a pair $(T,r)$, where $T$ is a proper tree and $r$ is a leaf of $T$. We say that $r$ is the {\em root} of the twig.
Let $G$ be a stub graph, $s$ a stub incident with a vertex $v$ in $G$ and $(T,r)$ a twig disjoint from $G$. A stub graph obtained from $G$ by {\em expanding $s$ by $(T,r)$} is the stub graph obtained from $G\setminus s$ and $T$ by identifying $v$ and $r$. 

An {\em embedding} of a tree $T$ in a stub graph $G$ is a subgraph $T'$ of $G$ such that each stub in $T'$ has different index and there exists a mapping $\st$ assigning a twig $(T_s,r_s)$ to each stub $s$ in $T'$ such that expanding every stub $s$ in $T'$ by $\st(s)$ yields a copy of $T$. See Figure~\ref{fig:stub-dec} for an example. 

We say that a stub graph $G$ has a $\TT$-decomposition if its edge set can be decomposed into disjoint sets $\{E_i\}_{i\in [k]}$ such that each $E_i$ forms an embedding of some $T\in \TT$. Note that this definition coincides with the definition of a $\TT$-decomposition in the usual sense if $G$ has no stubs. 

We denote the graph obtained from $G$ by removing all the stubs by $G^{-}$ and we say that $G$ is {\em $k$-edge-connected} if $G^{-}$ is $k$-edge-connected. 
The next observation asserts that a $\TT$-decomposition of $G^{-}$ can be easily extended to a $\TT$-decomposition of $G$.

\begin{observation}\label{obs:stub-dec}
If $G^{-}$ has a $\TT$-decomposition $\TT_1$, there exists a $\TT$-decomposition $\TT_2$ of $G$ such that $\TT_1\subseteq \TT_2$. Moreover, given $T\in \TT$, there exists $\TT_2$ such that $\TT_2\setminus \TT_1$ contains only embeddings of $T$.
\end{observation}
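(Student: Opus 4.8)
The plan is to process the stubs of $G$ one at a time, each time extending the current decomposition by expanding the chosen stub with a suitable twig, and to handle all the stubs by adding only embeddings of the single prescribed tree $T$. First I would observe that it suffices to treat a single stub $s$ incident with a vertex $v$: if the claim holds for one stub (adding finitely many new embeddings, all of $T$, whose stubs are themselves disposed of), then since $G$ has finitely many stubs we iterate, and the final collection of new pieces consists only of embeddings of $T$, so $\TT_2 \setminus \TT_1$ is as required. Note that after the expansion step we may have introduced new stubs (coming from the twig used to embed $T$), so the induction is on the number of stubs of the current stub graph, and the key point is that this number can be kept finite and the process terminates.

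Now fix the stub $s$ at $v$ and fix any leaf $r$ of $T$ and let $u$ be the neighbour of $r$ in $T$. The core idea is to build one embedding of $T$ whose "body" is a single edge at $v$ together with a bunch of stubs, so that expanding those stubs reconstructs $T$. Concretely, since $v$ has degree at least one (it carries the stub $s$) and $G^{-}$ has a decomposition into trees from $\TT$, the vertex $v$ lies on some edge $e = vw$ of $G^{-}$; I would use this edge $e$ to play the role of the edge $ru$ of $T$, and then use stubs to encode the rest of $T$. More precisely: take the edge $e=vw$, declare it to correspond to $ru$ in a copy of $T$ rooted so that $r \mapsto v$ and $u \mapsto w$. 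Every other vertex $x$ of $T$ (and the subtrees hanging below $u$) must be created by expanding stubs; so I attach to $w$ a fresh stub of a new index, let its twig be the component of $T - ru$ containing $u$ (rooted at $u$). Then the new embedding $E^{\*}$ consists of the single edge $e$ plus one new stub at $w$ with an index not used elsewhere in $E^{\*}$, and expanding that stub by the designated twig yields exactly $T$. Crucially $E^{\*}$ uses exactly one previously-present edge of $G^{-}$, namely $e$, and creates exactly one new stub.

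There is a subtlety: the edge $e$ we stole from $G^{-}$ was part of some piece in $\TT_1$, so removing it would break that piece. To avoid this I would instead not reuse an edge of $G^{-}$ at all, but build the embedding of $T$ at $v$ entirely out of the stub $s$ itself together with new stubs. That is: take $s$, which is incident to $v$; declare $v \mapsto r$; now $T - r$ (rooted at $u$, where $u$ is the neighbour of $r$) must be produced by expanding $s$. So set the twig assigned to $s$ to be $(T - r\text{'s structure rebuilt appropriately})$ — more carefully, $\st(s) := (T^{+}, u)$ where $T^{+}$ is a copy of $T$ with the leaf $r$ contracted onto $v$... this is getting the bookkeeping slightly wrong; the clean statement is: the embedding of $T$ we add is just the single stub $s$, with $\st(s)$ equal to the twig $(T, r)$ itself (possible because $r$ is a leaf of the proper tree $T$), and expanding $s$ by $(T,r)$ by definition identifies $v$ with $r$ and produces a copy of $T$ attached at $v$. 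Then $E^{\*} = \{s\}$ is an embedding of $T$, it uses no edge of $G^{-}$, it removes the stub $s$ from the list, and it introduces no new stub. Setting $\TT_2 := \TT_1 \cup \{E^{\*}\}$ handles the stub $s$; iterating over all stubs of $G$ gives the result, with $\TT_2 \setminus \TT_1$ consisting solely of embeddings of $T$ (one per stub).

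I expect the main obstacle to be purely notational: matching the formal definitions of "embedding", "twig", and "expanding a stub" so that the one-stub embedding $\{s\}$ with $\st(s) = (T, r)$ really does satisfy the definition (in particular that its single stub $s$ has a well-defined index, which it does, and that expanding it yields $T$, which holds by the definition of expansion since $r$ is a leaf of $T$). Once that is pinned down, termination is immediate because each step strictly decreases the stub count of the current stub graph while leaving $G^{-}$ untouched, and additivity of decompositions (Observation~\ref{obs:trans}-style reasoning, or simply disjointness of edge sets) guarantees the pieces remain pairwise edge-disjoint.
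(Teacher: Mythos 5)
Your proposal arrives at exactly the paper's argument: each stub $s$ by itself is an embedding of the prescribed tree $T$ once you set $\st(s)=(T,r)$ for a leaf $r$ of $T$, so $\TT_2=\TT_1\cup\{\{s\}:s\text{ a stub of }G\}$ works and all new pieces are embeddings of $T$. The exploratory detour with an edge $e=vw$ (correctly discarded) and the framing as an induction with a termination concern are both unnecessary — no expansion is actually performed in the host graph, so no new stubs arise and there is nothing to iterate; you simply append one one-stub embedding per stub of $G$ in a single step.
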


\begin{proof}
It follows from the fact that a stub $s$ (with the vertex incident to it) forms an embedding of any proper tree $T$. It is enough to let $\st(s)=(T,r)$, where $r$ is some leaf of $T$.
\end{proof}

In particular, a stub graph $G$ with no edges has a $\TT$-decomposition for any set of proper trees $\TT$, since $G^{-}$ has a trivial (empty) $\TT$-decomposition.
It follows that Lemma~\ref{lem:conn-twotrees} holds for stub graphs as well.

Next, we introduce some more tools and terminology. Let $G$ be a multigraph. We call a partition $(A,B)$ of $V(G)$ into two parts a {\em cut} in $G$. We denote $E(A,B)$ the set of edges of $G$ incident with a vertex in both $A$ and $B$ and call $|E(A,B)|$ the {\em order of the cut} $(A,B)$. We now prove Lemma~\ref{lem:cut}. Note that the definition of a cut and the statement of the lemma trivially extend to stub graphs (by considering the multigraph $G^-$ instead of the stub graph $G$). 

\begin{proof}[Proof of Lemma~\ref{lem:cut}]
Let $(A,B)$ be a cut in $G$ of order at most $2k$ such that $A$ is inclusion-wise minimal. Assume that $A$ has more than one vertex and let $(A_1,A_2)$ be a cut in $G[A]$. By minimality of $A$, we have that the cuts $(A_1,V(G)\setminus A_1)$ and $(A_2,V(G)\setminus A_2)$ have order greater than $2k$. Since $|E(A_1,V(G)\setminus A_1)|+ |E(A_2,V(G)\setminus A_2)|=|E(A,B)|+2|E(A_1,A_2)|$, $(A_1,A_2)$ has order greater than $k$.
\end{proof}

The following results of Czumaj and Strothmann were originally proven only for simple graphs, however, they easily extend to multigraphs.

\begin{theorem}[Czumaj, Strothmann~\cite{bib:czumaj}, extended]\label{thm:two-sparse-tree}

Every $2$-edge-connected multigraph $G$ contains a spanning tree $T$
such that
$\deg_T v\leq (\deg_G v+3)/2$ for every vertex $v$ of $G$.
\end{theorem}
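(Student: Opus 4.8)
The plan is to reduce the multigraph statement to the simple‑graph case, which is exactly the theorem of Czumaj and Strothmann~\cite{bib:czumaj}. We may first delete any loops of $G$, as they affect neither spanning trees nor edge‑connectivity. Let $H$ be the graph obtained from $G$ by subdividing every edge exactly once; write $w_e$ for the new vertex inserted on the edge $e=uv$ and put $W=\{w_e:e\in E(G)\}$. Then $H$ is \emph{simple} (several parallel $uv$‑edges of $G$ become internally disjoint paths $u\,w_e\,v$, so $H$ has no two parallel edges), $H$ is $2$‑edge‑connected (subdividing an edge of a $2$‑edge‑connected graph yields a $2$‑edge‑connected graph), $\deg_H(v)=\deg_G(v)$ for every $v\in V(G)$ while $\deg_H(w_e)=2$ for every $e$, and $H$ is bipartite with parts $V(G)$ and $W$, since each edge of $H$ joins an original vertex to a subdivision vertex.

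First I would apply the simple‑graph theorem to $H$, getting a spanning tree $T_H$ of $H$ with $\deg_{T_H}(x)\le(\deg_H(x)+3)/2$ for every $x$. For $x=v\in V(G)$ this reads $\deg_{T_H}(v)\le(\deg_G(v)+3)/2$, the bound we want; for $x=w_e$ it gives $\deg_{T_H}(w_e)\le 5/2$, hence $\deg_{T_H}(w_e)\in\{1,2\}$ (it is at least $1$ since $T_H$ spans). So $T_H$ is already essentially a spanning tree of $G$, the only obstruction being that some subdivision vertices can be leaves of $T_H$.

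The second step removes that obstruction. Let $L\subseteq W$ be the set of subdivision vertices that are leaves of $T_H$ and set $T':=T_H-L$. Since $W$ is independent in $H$, $L$ is an independent set of leaves, so $T'$ is still a tree, and deleting $L$ lowers only degrees of vertices in $V(G)$, so every surviving $w\in W\setminus L$ still satisfies $\deg_{T'}(w)=2$. Counting the edges of the tree $T'$ in two ways — once as $|V(G)|+|W\setminus L|-1$, and once, using bipartiteness and $\deg_{T'}\equiv 2$ on $W\setminus L$, as $2\,|W\setminus L|$ — forces $|W\setminus L|=|V(G)|-1$. Now suppress every vertex of $W\setminus L$, replacing its two tree‑edges $u\,w_e$, $w_e\,v$ by the edge $e=uv$ of $G$. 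Since $T'$ is connected, the result $T$ is a connected spanning subgraph of $G$ on $|V(G)|$ vertices with exactly $|V(G)|-1$ edges, hence a spanning tree of $G$; and suppressing a degree‑$2$ vertex changes no other degree, so $\deg_T(v)=\deg_{T'}(v)\le\deg_{T_H}(v)\le(\deg_G(v)+3)/2$ for all $v$, as required.

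The step that needs the most care is this clean‑up: one has to be certain that discarding the leaf subdivision vertices and contracting the remaining ones yields a genuine \emph{spanning tree} of $G$ and not a spanning forest or a graph with too few edges, and the two‑way edge count is exactly what guarantees this — it uses crucially that $H$ is bipartite, so that a leaf subdivision vertex of $T_H$ is never adjacent to another subdivision vertex. The supporting facts ($H$ simple, subdivision preserving $2$‑edge‑connectivity, loops being irrelevant) are routine. One could instead reprove the Czumaj–Strothmann inequality directly by an extremal exchange argument — take a spanning tree minimising $\sum_v\binom{\deg_T v}{2}$ and, at a vertex $v$ violating the bound, exchange along one of the at least $2\deg_T(v)-\deg_G(v)\ge 4$ fundamental cuts at $v$ whose far side carries an edge of $G$ avoiding $v$ — but that argument, too, never uses simplicity, and the subdivision reduction is the shorter route.
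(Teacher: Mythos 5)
Your proof is correct, but it takes a genuinely different route from the paper's. The paper disposes of the multigraph extension in one line: take a strongly connected \emph{balanced} orientation of $G$ (which exists since $G$ is $2$-edge-connected), and take an out-branching (spanning arborescence) in it. In such an orientation $d^+(v)\le\lceil\deg_G(v)/2\rceil\le(\deg_G(v)+1)/2$; in an out-branching every vertex has in-degree at most $1$, so $\deg_T(v)\le d^+(v)+1\le(\deg_G(v)+3)/2$. This argument is self-contained and does not invoke the simple-graph theorem at all. Your proof, by contrast, is a genuine \emph{reduction}: subdivide every edge so that the original Czumaj--Strothmann theorem (for simple graphs) applies, then observe that degree-$2$ subdivision vertices are either leaves of the resulting tree (and can be pruned) or have degree exactly $2$ (and can be suppressed), with the bipartite double-count confirming that the outcome is a spanning tree of $G$ of the right size. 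Both arguments are sound; the orientation proof is shorter and more idiomatic in this area, while your subdivision reduction is more elementary and makes the dependence on the simple-graph theorem explicit. You also sketch the exchange argument at the end, which would reprove the inequality from scratch, and that is in fact yet a third valid route.

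Two small remarks on your write-up. First, the double-count $|V(G)|+|W\setminus L|-1=2|W\setminus L|$ is a nice sanity check but is not strictly needed: suppressing degree-$2$ vertices in a tree always yields a tree, and since $T'$ contains all of $V(G)$, the result spans $G$; acyclicity is preserved because $T'$ is a tree and two distinct paths $u\,w_{e_1}\,v$ and $u\,w_{e_2}\,v$ in $T'$ would form a cycle. Second, the observation that loops can be discarded is needed and correct: a loop at $v$ contributes $2$ to $\deg_G(v)$ but nothing to any spanning tree, so deleting loops only makes the target bound tighter, and loops do not affect edge-connectivity of a graph on at least two vertices.
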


\new{To find such a tree, it is enough to take an out-branching in a strongly connected balanced orientation of $G$.}

\begin{theorem}[Czumaj, Strothmann~\cite{bib:czumaj}, extended]\label{thm:sparse-tree}

Let $p$ be a positive integer. If a multigraph $G$ contains $2^p$ edge-disjoint spanning trees, then $G$ has a spanning tree $T$ such that
$\deg_T v\leq \deg_G v/2^p+3p/2$ for every vertex $v$ of $G$.
\end{theorem}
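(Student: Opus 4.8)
The plan is to prove this by induction on $p$, using Theorem~\ref{thm:two-sparse-tree} both for the base case and as the engine of the inductive step. The key observation is that the union of $j$ edge-disjoint spanning trees is $j$-edge-connected, so a multigraph with two edge-disjoint spanning trees is exactly the kind of object to which Theorem~\ref{thm:two-sparse-tree} applies.

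For the base case $p=1$: if $G$ contains two edge-disjoint spanning trees, then their union is a $2$-edge-connected spanning subgraph of $G$, and applying the extended Theorem~\ref{thm:two-sparse-tree} to it yields a spanning tree $T$ of $G$ with $\deg_T v \le (\deg_G v + 3)/2 = \deg_G v/2 + 3/2$ for every $v$, which is exactly the claimed bound for $p=1$.

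For the inductive step, assume the statement for $p$ and suppose $G$ contains $2^{p+1}$ edge-disjoint spanning trees $F_1,\dots,F_{2^{p+1}}$. Split them into two halves and set $G_1=F_1\cup\cdots\cup F_{2^p}$ and $G_2=F_{2^p+1}\cup\cdots\cup F_{2^{p+1}}$; these are edge-disjoint spanning submultigraphs of $G$, and each $G_i$ contains $2^p$ edge-disjoint spanning trees. By the induction hypothesis, $G_i$ has a spanning tree $T_i$ with $\deg_{T_i}v\le \deg_{G_i}v/2^p + 3p/2$ for every $v$. Since $G_1$ and $G_2$ are edge-disjoint, $T_1$ and $T_2$ are edge-disjoint, so $T_1\cup T_2$ is a $2$-edge-connected spanning submultigraph of $G$ with $\deg_{T_1\cup T_2}v=\deg_{T_1}v+\deg_{T_2}v$. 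Applying the extended Theorem~\ref{thm:two-sparse-tree} to $T_1\cup T_2$ produces a spanning tree $T$ of $G$ with
\[
\deg_T v \le \frac{\deg_{T_1}v+\deg_{T_2}v+3}{2} \le \frac{1}{2}\left(\frac{\deg_{G_1}v+\deg_{G_2}v}{2^p}+3p+3\right) \le \frac{\deg_G v}{2^{p+1}}+\frac{3(p+1)}{2},
\]
where the last inequality uses $\deg_{G_1}v+\deg_{G_2}v\le \deg_G v$ (as $G_1,G_2$ are edge-disjoint subgraphs of $G$). This completes the induction.

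I do not expect a genuine obstacle here: once the recursion "split into two halves, recurse, then merge with Theorem~\ref{thm:two-sparse-tree}" is identified, the argument is routine. The only points needing (brief) care are the standard fact that a union of two edge-disjoint spanning trees is $2$-edge-connected, so that the multigraph version of Theorem~\ref{thm:two-sparse-tree} legitimately applies to $T_1\cup T_2$, and the degree bookkeeping, where the additive term $3p/2$ is calibrated precisely so that halving it and adding the $3/2$ from Theorem~\ref{thm:two-sparse-tree} reproduces $3(p+1)/2$.
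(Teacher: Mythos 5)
Your proof is correct, and the degree bookkeeping checks out: the additive constant $3p/2$ is exactly the fixed point of the recursion $c \mapsto (c + 3/2)$, applied once per halving. A minor point worth stating explicitly (you gesture at it) is why $T_1 \cup T_2$ is $2$-edge-connected: it is connected (it contains a spanning tree), and deleting any single edge $e$ leaves at least one of $T_1$, $T_2$ intact since they are edge-disjoint, so there is no bridge.

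Note, however, that the paper does not contain a proof of this statement to compare against. It is presented as an attributed result of Czumaj and Strothmann, with only a one-line remark that the original simple-graph version "easily extends to multigraphs"; the burden of proof is discharged by the citation. Your halving recursion, with Theorem~\ref{thm:two-sparse-tree} as the base and merge step, is the standard argument behind the Czumaj--Strothmann bound, so you have in effect reconstructed the cited proof rather than found an alternative to something in this paper.
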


The following Corollary~\ref{cor:sparse-tree} of Theorem~\ref{thm:sparse-tree} was essentially proven in~\cite{bib:thomassen-paths}. Our version differs in some details. Among other things, we use the following theorem of Nash-Williams and Tutte to replace the requirement of having a collection of edge-disjoint spanning trees by edge-connectivity. 

\begin{theorem}[Nash-Williams~\cite{bib:nash-williams}, Tutte~\cite{bib:tutte}]\label{thm:nash-williams}
If a multigraph $G$ is $2k$-edge-connected, then $G$ contains $k$ edge-disjoint spanning trees.
\end{theorem}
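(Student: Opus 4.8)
The plan is to deduce the statement from the general spanning-tree-packing theorem of Nash-Williams and Tutte in its partition form: a multigraph $G$ contains $k$ edge-disjoint spanning trees if and only if for every partition $\mathcal{P}$ of $V(G)$ into $r$ nonempty classes the number $e_G(\mathcal{P})$ of edges joining two distinct classes satisfies $e_G(\mathcal{P}) \ge k(r-1)$. I would take this criterion as the engine and reduce the $2k$-edge-connectivity hypothesis to it by a one-line counting argument.

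Concretely, first I would fix an arbitrary partition $\mathcal{P} = \{V_1, \dots, V_r\}$ of $V(G)$. If $r = 1$ (or $|V(G)| \le 1$) there is nothing to check, so I may assume $r \ge 2$, in which case each $V_i$ is a nonempty proper subset of $V(G)$. Since $G$ is $2k$-edge-connected, the edge cut separating $V_i$ from the rest has at least $2k$ edges, i.e. $d_G(V_i) \ge 2k$ for each $i$. Each edge counted in $e_G(\mathcal{P})$ contributes to exactly two of the terms $d_G(V_i)$ (one per endpoint), while each edge inside a single class contributes to none, so
\[
2\,e_G(\mathcal{P}) \;=\; \sum_{i=1}^r d_G(V_i) \;\ge\; 2kr ,
\]
and therefore $e_G(\mathcal{P}) \ge kr \ge k(r-1)$. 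As $\mathcal{P}$ was arbitrary, the Nash-Williams–Tutte criterion is satisfied and $G$ has $k$ edge-disjoint spanning trees.

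If one wants the partition criterion itself rather than quoting it, the cleanest route is Edmonds' matroid union theorem applied to $k$ disjoint copies of the cycle matroid $M(G)$: the maximum number of edges in a union of $k$ forests of $G$ equals $\min_{A \subseteq E(G)} \bigl(|E(G)| - |A| + k\, r_{M(G)}(A)\bigr)$, and a standard argument shows the minimizing $A$ may be taken to be the set of edges lying inside the classes of some partition, which turns the minimum into the partition bound and makes equality with $k(|V(G)|-1)$ exactly the condition for $k$ edge-disjoint spanning trees. The main (and essentially only) obstacle is bookkeeping rather than depth: one has to make sure the double count of crossing edges and the rank computation line up, but the $2k$ hypothesis is precisely twice the per-class amount needed, so the inequality $e_G(\mathcal{P}) \ge kr$ clears the required bound $k(r-1)$ with room to spare, and the corner cases ($r=1$, a one-vertex graph) are vacuous.
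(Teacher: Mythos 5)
Your argument is correct, and it is the standard way to obtain this corollary. For the record, the paper itself does not prove this statement: it is cited directly to Nash--Williams and Tutte as a known theorem. What you have done is derive the $2k$-edge-connectivity version from the stronger partition form of the Nash--Williams--Tutte theorem ($G$ has $k$ edge-disjoint spanning trees iff every partition $\mathcal{P}$ of $V(G)$ into $r$ parts has at least $k(r-1)$ crossing edges), via the double count $2\,e_G(\mathcal{P}) = \sum_i d_G(V_i) \ge 2kr$. This is a clean and correct reduction; the only caveat is that it is not a proof from first principles, since it still invokes the (deeper) partition criterion as a black box. Your sketch of the matroid-union route to the partition criterion is the right direction if one wanted a self-contained argument, but as written the engine of the proof remains the cited theorem of Nash--Williams and Tutte, which is exactly what the paper does by citation.
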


\begin{corollary}\label{cor:sparse-tree}
For every $\varepsilon>0$ and integer $m$, there exists $L$ such that every $2^n$-edge-connected stub graph $G$ with minimum degree at least $L$, where $n=2+m+ \lceil\log (1/\varepsilon)\rceil$,   
has $2^{m}$ edge-disjoint spanning trees $T_1,\ldots, T_{2^m}$ such that
\[\sum_{i\in [2^m]}\deg_{T_i} v \leq \varepsilon \deg_{G} v\]
 for every $v\in V(G)$.
\end{corollary}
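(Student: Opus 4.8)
The plan is to combine the Nash--Williams--Tutte theorem (Theorem~\ref{thm:nash-williams}) with the degree bound of Theorem~\ref{thm:sparse-tree} through a straightforward grouping argument. We may assume $\varepsilon\le 1$, so that $q:=\lceil\log(1/\varepsilon)\rceil\ge 0$ and hence $n-1=1+m+q\ge m$ (for $\varepsilon>1$ the statement is weaker and the same argument applies, using a single spanning tree per block once $n-1-m\le 0$). Since $G$ is $2^n$-edge-connected, the multigraph $G^-$ is $2^n=2\cdot 2^{n-1}$-edge-connected, so by Theorem~\ref{thm:nash-williams} it contains $2^{n-1}$ pairwise edge-disjoint spanning trees $F_1,\dots,F_{2^{n-1}}$; these are spanning trees of $G$ in the sense of the statement, since stubs play no role in trees, connectivity or spanning trees.

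First I would partition $[2^{n-1}]$ into $2^m$ blocks $I_1,\dots,I_{2^m}$ of equal size $2^{n-1-m}=2^{1+q}$, and set $G_j:=\bigcup_{i\in I_j}F_i$ for $j\in[2^m]$. The submultigraphs $G_1,\dots,G_{2^m}$ of $G^-$ are pairwise edge-disjoint, and each $G_j$ contains the $2^{1+q}$ edge-disjoint spanning trees $\{F_i:i\in I_j\}$. Applying Theorem~\ref{thm:sparse-tree} to $G_j$ with $p=1+q$ yields a spanning tree $T_j$ of $G_j$ (hence of $G$) with
\[
\deg_{T_j}v\ \le\ \frac{\deg_{G_j}v}{2^{1+q}}+\frac{3(1+q)}{2}\qquad\text{for every }v\in V(G).
\]
The trees $T_1,\dots,T_{2^m}$ are pairwise edge-disjoint because the $G_j$ are.

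It then remains to add these bounds up. Since the $G_j$ are edge-disjoint submultigraphs of $G^-$ we have $\sum_j\deg_{G_j}v\le\deg_{G^-}v\le\deg_G v$, and since $q\ge\log(1/\varepsilon)$ we have $2^{1+q}\ge 2/\varepsilon$; hence
\[
\sum_{j\in[2^m]}\deg_{T_j}v\ \le\ \frac{\deg_G v}{2^{1+q}}+2^m\cdot\frac{3(1+q)}{2}\ \le\ \frac{\varepsilon}{2}\,\deg_G v+\frac{3\cdot 2^{m}(1+q)}{2}.
\]
Taking $L:=\bigl\lceil 3\cdot 2^{m}\,(1+\lceil\log(1/\varepsilon)\rceil)/\varepsilon\bigr\rceil$ and using the hypothesis $\deg_G v\ge L$ for every $v$, the additive term is at most $\tfrac{\varepsilon}{2}\deg_G v$, so the right-hand side is at most $\varepsilon\deg_G v$, which is the required inequality.

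I do not expect a genuine obstacle here: the content sits entirely in the two imported theorems, and the only thing that needs care is to confirm that the prescribed connectivity $2^n$ is exactly enough so that (i) $2^{n-1}$ splits into $2^m$ blocks with per-block parameter $p=1+q$, making the multiplicative error $2^{-p}$ at most $\varepsilon/2$, and (ii) the accumulated additive error $2^m\cdot 3p/2$ is absorbed by the remaining $\tfrac{\varepsilon}{2}\deg_G v$ via the minimum-degree hypothesis. Stubs are harmless throughout, as they only increase $\deg_G v$ and so weaken the target inequality.
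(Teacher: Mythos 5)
Your argument is correct and is essentially identical to the paper's proof: both extract $2^{n-1}$ edge-disjoint spanning trees of $G^-$ via Nash--Williams--Tutte, group them into $2^m$ edge-disjoint unions each containing $2^{1+\lceil\log(1/\varepsilon)\rceil}$ spanning trees, apply Theorem~\ref{thm:sparse-tree} to each union, and absorb the additive error $3\cdot 2^m p/2$ using the minimum-degree hypothesis. Your bookkeeping of the constants is in fact slightly tighter than the paper's, and the aside about $\varepsilon>1$ is an edge case the paper also leaves implicit.
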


\begin{proof}
By Theorem~\ref{thm:nash-williams}, the graph $G^{-}$ contains $2^{n-1}=2^m\cdot 2^{\lceil \log(1/\varepsilon)\rceil+1}$ edge-disjoint spanning trees. Thus, $G^{-}$ contains $2^m$ edge-disjoint $2^{\lceil \log(1/\varepsilon)\rceil+1}$-edge-connected spanning subgraphs (formed by unions of the spanning trees). From Theorem~\ref{thm:sparse-tree} applied to these $2^{\lceil \log(1/\varepsilon)\rceil+1}$-edge-connected graphs, it follows that $G^{-}$ contains $2^{m}$ edge-disjoint spanning trees $T_1,\ldots,T_{2^m}$ such that \[\sum_{i\in [2^m]}\deg_{T_i}v\leq \frac{\deg_{G^-} v}{2^{\lceil \log(1/\varepsilon)\rceil+1}}+3\cdot 2^{m}(\lceil \log(1/\varepsilon)\rceil+1)\leq \frac{\varepsilon \deg_{G} v}{2}+3\cdot 2^{m}(\lceil \log(1/\varepsilon)\rceil+1)\] for every $v\in V(G)$. 
Moreover $3\cdot 2^m (\lceil \log(1/\varepsilon)\rceil+1) \leq \varepsilon L/2$ for $L$ sufficiently large. The result follows.
\end{proof}

Corollary~\ref{cor:sparse-tree} implies the following.

\begin{corollary}\label{cor:split}
For any positive integers $k_0$ and $\delta_0$, there exists $d_0$ such that the edge set of every $16k_0$-edge-connected stub graph $G$ with minimum degree at least $d_0$ can be decomposed into a $k_0$-edge-connected graph and a stub graph of minimum degree at least $\delta_0$.
\end{corollary}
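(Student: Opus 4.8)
The plan is to apply Corollary~\ref{cor:sparse-tree} with $m=0$ and a suitable $\varepsilon$, and then peel off a single sparse spanning tree whose degrees are a small fraction of the degrees of $G$; the complement of that tree keeps most of the edge-connectivity and most of the degree, and a few iterations separate out the low-degree part. More concretely, set $\varepsilon=1/(16)$ (or any constant making the arithmetic below work), so that Corollary~\ref{cor:sparse-tree} with $m=0$ and $n=2+\lceil\log(1/\varepsilon)\rceil$ produces, once $\deg_G v\ge L$ for all $v$, a single spanning tree $T$ of $G^{-}$ with $\deg_T v\le \varepsilon\deg_G v$ for every $v$. I would repeat this extraction $r$ times on successive complements: after removing $T_1,\dots,T_j$, the remaining graph $G_j$ still has minimum degree at least $(1-j\varepsilon)\delta(G)$ and is still highly edge-connected (removing a spanning tree drops edge-connectivity by at most $1$, but more to the point removing $r$ spanning trees from a $16 k_0$-edge-connected graph leaves it $(16k_0-r)$-edge-connected, still $\ge k_0$-edge-connected for $r$ a fixed constant). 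Collect $F:=T_1\cup\dots\cup T_r$; this $F$ is a stub graph (it inherits whatever stubs we put where), and I want $F$ to be the low-degree part, while $G^{-}\setminus E(F)$ is the $k_0$-edge-connected part.

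The issue with that naive split is that a union of $r$ spanning trees can still have a vertex of large degree (up to $r$ in the worst case is fine, but Corollary~\ref{cor:sparse-tree} only bounds the \emph{sum} $\sum_i \deg_{T_i}v$ by $\varepsilon\deg_G v$, which could be large when $\deg_G v$ is large) — so $F$ is not automatically a bounded-degree graph, only a graph whose degrees are an $\varepsilon$-fraction of those of $G$. To fix this, instead of aiming for bounded degree directly, I would iterate the \emph{sparsification} step: take the part $F_1$ with $\deg_{F_1}v\le\varepsilon\deg_G v$, then apply Corollary~\ref{cor:sparse-tree} again \emph{inside} $F_1$ (after padding its minimum degree — here is where the real care is needed, since $F_1$ may have low minimum degree) to peel a sparse spanning tree of each component, and so on, halving the degree bound each round; after $O(\log(\Delta_G/\delta_0))$ rounds one reaches a graph of maximum — hence minimum — degree $\le\delta_0$. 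But this recursion does not terminate uniformly in $\Delta_G$. The clean route is the other one used in~\cite{bib:thomassen-paths}: apply Corollary~\ref{cor:sparse-tree} \emph{once} with $m$ large enough that $2^m\ge$ the target, extracting $2^m$ edge-disjoint spanning trees $T_1,\dots,T_{2^m}$ with $\sum_i\deg_{T_i}v\le\varepsilon\deg_G v$; then the graph $H:=T_1\cup\dots\cup T_{2^m}$ has minimum degree $\ge 2^m$ (each $T_i$ is spanning) and maximum degree $\le\varepsilon\Delta_G$, and $G^{-}\setminus E(H)$ has minimum degree $\ge(1-\varepsilon)\delta(G)$ and is still $\ge 8k_0$-edge-connected provided $\delta(G)$ was large. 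This still doesn't bound $\Delta(H)$ by $\delta_0$; it only makes $H$ sparser than $G$ by a constant factor.

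So the genuinely correct approach — and the one I expect the paper takes — is to peel off spanning trees one at a time and stop as soon as the \emph{remainder}, not the peeled part, has minimum degree close to a target: run the extraction until the remaining graph $G^{*}$ has minimum degree in the window $[\delta_0,\;16k_0+\delta_0]$ is impossible to control vertex-by-vertex, so instead peel until $G^{*}$ is $k_0$-edge-connected and has whatever degree is left, and put \emph{all} removed edges into the stub graph part; the stub graph part then has minimum degree at least $\delta_0$ automatically because we only removed $16k_0-k_0=15k_0$ spanning trees' worth of edges — wait, that lower-bounds, not upper-bounds. The actual fix: we want the \emph{stub graph} to have min degree $\ge\delta_0$, i.e.\ we want to remove \emph{enough} edges from $G$ into it. So peel off $\delta_0$ edge-disjoint spanning trees $T_1,\dots,T_{\delta_0}$ (possible since $G^{-}$ is $16k_0$-edge-connected hence contains $8k_0\ge\delta_0$ edge-disjoint spanning trees when $k_0$ is large, and if $k_0$ is small use Corollary~\ref{cor:sparse-tree} to get $2^m\ge\delta_0$ spanning trees for suitable $m$, at the cost of requiring $d_0$ large); their union is a spanning subgraph of minimum degree $\ge\delta_0$, which we take as the stub graph (with no stubs, in fact — or with stubs if later needed, but Observation~\ref{obs:stub-dec} lets us add stubs freely). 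The complement $G^{-}\setminus\bigcup_{i}E(T_i)$ has edge-connectivity at least $16k_0-\delta_0$; if $\delta_0\le 15k_0$ this is $\ge k_0$ and we are done, and otherwise we first enlarge $k_0$ harmlessly to $\max(k_0,\delta_0)$, noting a graph that is $k'$-edge-connected is $k_0$-edge-connected. The one point to verify carefully, and the main obstacle, is producing the $\delta_0$ edge-disjoint spanning trees with the degree fraction under control so that the remainder stays $k_0$-edge-connected while the stub part has the prescribed minimum degree — this is exactly Corollary~\ref{cor:sparse-tree} applied with $m=\lceil\log_2\delta_0\rceil$ and $\varepsilon$ chosen so that $(1-\varepsilon)$ of the original edge-connectivity $16k_0$ still exceeds $k_0$, i.e.\ $\varepsilon<15/16$, and then $d_0=L(\varepsilon,m)$ from that corollary (possibly increased so that $(1-\varepsilon)d_0\ge k_0$ is not even needed since connectivity, not degree, of the remainder is what matters). $\qed$
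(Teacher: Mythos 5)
Your proposal contains the right ingredients --- Corollary~\ref{cor:sparse-tree} plus Nash--Williams --- but it assigns the two pieces of the decomposition to the wrong sides, and this is fatal. The paper's proof (and the only argument that the available edge-connectivity budget supports) makes the \emph{spanning trees} the $k_0$-edge-connected part: with $\varepsilon = 1/2$ and $m = \lceil \log k_0 \rceil$, Corollary~\ref{cor:sparse-tree} produces $k_0$ edge-disjoint spanning trees $T_1,\ldots,T_{k_0}$ of $G^-$ with $\sum_i \deg_{T_i} v \le \tfrac{1}{2}\deg_G v$, and since the union of $k_0$ edge-disjoint spanning trees is automatically $k_0$-edge-connected, that union is the $k_0$-edge-connected part. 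The complement (together with all the stubs) has minimum degree $\ge \tfrac{1}{2}\deg_G v \ge \delta_0$ once $d_0 \ge 2\delta_0$, and that is the stub-graph part. Note also that the hypothesis of Corollary~\ref{cor:sparse-tree} is satisfied because $2^n = 2^{2 + m + \lceil\log(1/\varepsilon)\rceil} = 2^{3 + \lceil\log k_0\rceil} \le 16k_0$, so the required edge-connectivity is available.

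Your ``actual fix'' does the opposite: it makes the spanning trees the high-minimum-degree stub part and claims the complement is the $k_0$-edge-connected part. Two things go wrong. First, you want to apply Corollary~\ref{cor:sparse-tree} with $m = \lceil\log_2\delta_0\rceil$ so that $2^m \ge \delta_0$, but its hypothesis then demands edge-connectivity $2^n = 2^{2+m+\lceil\log(1/\varepsilon)\rceil}$, which grows with $\delta_0$; the given $16k_0$ is a fixed quantity independent of $\delta_0$, so the hypothesis simply fails for large $\delta_0$. Second, the inference that removing $\delta_0$ spanning trees from a $16k_0$-edge-connected graph leaves it $(16k_0-\delta_0)$-edge-connected (and more generally that $\varepsilon<15/16$ in Corollary~\ref{cor:sparse-tree} preserves edge-connectivity up to a $(1-\varepsilon)$ factor) is unfounded: that corollary controls vertex degrees, not cut sizes, and a single sparse spanning tree can cross a given cut many times. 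Amusingly, your middle ``clean route'' paragraph is essentially correct --- once you extract $2^m \ge k_0$ trees, take $H = T_1\cup\cdots\cup T_{2^m}$ as the $k_0$-edge-connected part and $G^-\setminus E(H)$ with the stubs as the high-degree part --- but you then dismissed it with the irrelevant complaint that $\Delta(H)$ is not bounded by $\delta_0$. No bound on $\Delta(H)$ is needed; only $k_0$-edge-connectivity of $H$ and high minimum degree of the complement, both of which that route gives.
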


\begin{proof}
Let $d_0=\max(2\delta_0,L)$, where $L$ is as in Corollary~\ref{cor:sparse-tree} for $\varepsilon=1/2$ and $m=\lceil \log k_0\rceil$. Observe that $2^n$ in Corollary~\ref{cor:sparse-tree} is then less than $16k_0$ (and equal to $8k_0$ if $k_0$ is a power of two). Thus, by Corollary~\ref{cor:sparse-tree}, there exist $k_0$ edge-disjoint spanning trees $T_1,\ldots, T_{k_0}$ such that $\sum_{i\in [k_0]}\deg_{T_i} v \leq 1/2 \deg_{G} v$ for every $v$. By the choice of $d_0$, $1/2 \deg_{G} v\geq \delta_0$ for every $v$ of $G$. Thus, $G\setminus (\bigcup_{i\in[k_0]} E(T_i))$ has minimum degree at least $\delta_0$.
\end{proof}

\begin{figure}
\begin{center}
\includegraphics[scale=1.2]{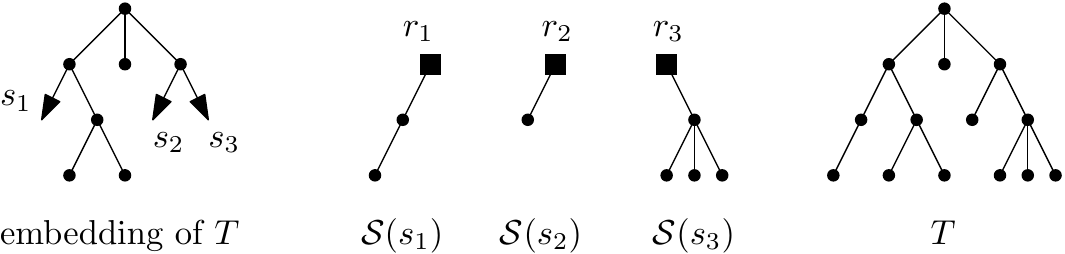}
\caption{Embedding of a tree. Stubs are depicted as arrows, roots are depicted as squares.}
\label{fig:stub-dec}
\end{center}
\end{figure}

We need the following easy consequence of B\'ezout identity for the proof of Lemma~\ref{lem:3tree}.
\begin{observation}\label{obs:bezout}
Let $a,b$ be positive integers and let $c$ be an integer such that $c>ab$ and $c$ is divisible by the greatest common divisor of $a$ and $b$. Then there exist non-negative integers $k_a$ and $k_b$ such that $k_b<a$ satisfying $k_a a +k_b b=c$.
\end{observation}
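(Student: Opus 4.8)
The plan is to reduce to the case $\gcd(a,b)=1$ and then exhibit the pair $(k_a,k_b)$ explicitly by a division argument. First I would write $g=\gcd(a,b)$, so that $a=ga'$, $b=gb'$ with $\gcd(a',b')=1$, and $c=gc'$ for some positive integer $c'$ (using the divisibility hypothesis); note $c'>a'b'$ since $c>ab\ge g a'b'\ge gc'$ would be false otherwise — more carefully, $c>ab=g^2a'b'\ge ga'b'$ gives $c'>a'b'$. It then suffices to find non-negative integers $k_a,k_b$ with $k_b<a'$ (hence a fortiori $k_b<a$) and $k_aa'+k_bb'=c'$, because multiplying through by $g$ recovers the desired identity for $a,b,c$.

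Now assume $\gcd(a,b)=1$. By B\'ezout's identity there is some integer solution to $xa+yb=c$; more concretely, let $b^{-1}$ denote an inverse of $b$ modulo $a$ and set $k_b$ to be the unique representative of $b^{-1}c \pmod a$ lying in $\{0,1,\dots,a-1\}$, so $0\le k_b<a$ and $a\mid (c-k_bb)$. Define $k_a:=(c-k_bb)/a$, which is an integer. The only thing left to check is that $k_a\ge 0$: since $k_b\le a-1$ we have $k_bb\le (a-1)b<ab<c$, so $c-k_bb>0$ and therefore $k_a>0$. Thus $k_a$ and $k_b$ are the required non-negative integers with $k_b<a$.

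There is no real obstacle here; the statement is exactly the standard observation that a large enough multiple of $\gcd(a,b)$ can be written as a non-negative combination with the coefficient of $b$ confined to a fixed residue window of length $a$. The only mild care needed is the bookkeeping in the reduction to the coprime case (making sure $c'>a'b'$ and that $k_b<a'$ implies $k_b<a$), and the sign check $c-k_bb>0$, which is immediate from the bound $c>ab$.
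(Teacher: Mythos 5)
Your proof is correct and takes essentially the same route as the paper's: normalize the coefficient of $b$ to lie in $\{0,\dots,a-1\}$ (you via a modular inverse after dividing out $\gcd(a,b)$, the paper via a direct B\'ezout solution shifted by multiples of $(b,-a)$), then use $c>ab$ to force the coefficient of $a$ to be nonnegative. The preliminary reduction to the coprime case is a minor stylistic variant rather than a different argument.
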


\begin{proof}
Let $d$ be the greatest common divisor of $a$ and $b$ and let $k_c$ be an integer such that $k_c d=c$. By B\'ezout identity there exist integers $x$ and $y$ satisfying $xa+yb=d$, thus $xk_c a +yk_c b=c$. Note that then also $(xk_c+ib)a +(yk_c-ia) b=c$ for any integer $i$. Let $i$ be such that $0\leq xk_c-ia <a$. Then $k_b=xk_c-ia$ and $k_a=xk_c+ib$ satisfy the claim of the observation, in particular, since $c>ab$ and $k_bb<ab$, $k_a$ must be positive.
\end{proof}

We now prove Lemma~\ref{lem:3tree}.
\begin{proof}[Proof of Lemma~\ref{lem:3tree}]
Let $k_0=\max(K(T,T_1),K(T,T_2), k_T)$, where $K$ is as in Lemma~\ref{lem:conn-twotrees} and $k_T$ is as in Theorem~\ref{thm:barat-thomassen}. Let $m=\max(|E(T)|,|E(T_1)|, |E(T_2)|)$, $d_0$ be as in Corollary~\ref{cor:split} for $k_0$ and $\delta_0=64k_0m+2|E(T)|(|E(T_1)|+|E(T_2)|)$. Let $k=16k_0$ and $\delta=d_0+2k$.


Let $(H_0, G_0):= (\emptyset, G)$ and we repeat the following recursive procedure, obtaining pairs of stub graphs $(H_i,G_i)$ until $G_i$ is empty. Let $n$ be the number of steps before $G_{n}$ is empty.

If $G_i$ is $k$-edge-connected or has only one vertex, we let $H_{i+1}=G_i$ and $G_{i+1}=\emptyset$.
Assume that $G_i$ is not $k$-edge-connected and has more than one vertex. Then, by Lemma~\ref{lem:cut}, there exists a cut $(A,B)$ in $G_i$ of order at most $2k$ such that $G[A]$ is $k$-edge-connected or has only one vertex.

Let $H_{i+1}=G_i[A]$ and let $C_{i+1}$ be the set of edges $uv\in E(G_i)$ with $u\in A$ and $v\in B$. Let $G_{i+1}$ be the stub graph obtained from $G_i[B]$ by adding a stub with index $i+1$ incident with $v$ for every edge in $C_{i+1}$ incident with $v$. 

Observe that $V(G)=\dot{\bigcup}_{i=0}^{n}V(H_i)$. Moreover, the graphs $G_i$ and $H_i$ have the following properties.

\begin{claim}
For every $i\in [n]$, the following holds:
\begin{itemize}
\item There are at most $2k$ stubs with index $i$ created during the procedure (i.e., in total, there are at most $2k$ stubs with index $i$ in the stub graphs $H_1, \ldots, H_n$),
\item $G_i$ has minimum degree at least $\delta$,
\item $H_i$ is $k$-edge-connected or has only one vertex,
\item $H_i$ has minimum degree at least $\delta-2k$ (minimum of the empty set is $\infty$).
\end{itemize} 
\end{claim}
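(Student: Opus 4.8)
The plan is to verify each of the four bullet points by induction on the step number $i$, following the recursive construction of the pairs $(H_i, G_i)$. The base case is $(H_0, G_0) = (\emptyset, G)$, where $G_0 = G$ has minimum degree at least $\delta$ by hypothesis, and there is nothing to check about $H_0$. For the inductive step, I assume the claim holds for all $G_j$, $H_j$ with $j \le i$ and analyze the two cases of the procedure applied to $G_i$.

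First I would dispose of the easy case: if $G_i$ is $k$-edge-connected or has a single vertex, then $H_{i+1} = G_i$ and $G_{i+1} = \emptyset$. No new stubs of index $i+1$ are created (so the first bullet is vacuous for index $i+1$, and for earlier indices it is untouched); $G_{i+1}$ is empty so its minimum degree is $\infty \ge \delta$; $H_{i+1} = G_i$ is $k$-edge-connected or a single vertex by the case assumption; and $H_{i+1}$ has minimum degree at least $\delta \ge \delta - 2k$ by the inductive hypothesis on $G_i$. In the main case, $G_i$ is not $k$-edge-connected and has more than one vertex, so Lemma~\ref{lem:cut} (applied to the multigraph $G_i^-$) yields a cut $(A, B)$ of order at most $2k$ with $G_i^-[A]$ being $k$-edge-connected or a single vertex. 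The set $C_{i+1}$ of crossing edges has size at most $2k$, hence at most $2k$ stubs of index $i+1$ are created — one per crossing edge — giving the first bullet. For earlier indices, no stubs are added or removed, so those counts are preserved; and since $H_1,\ldots,H_n$ partition the relevant subgraphs, the total count over all of them is what the claim asserts.

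For the degree bounds in the main case: $H_{i+1} = G_i[A]$ is $k$-edge-connected or a single vertex by Lemma~\ref{lem:cut}. Each vertex $v \in A$ loses, in passing from $G_i$ to $G_i[A]$, exactly the edges of $C_{i+1}$ incident to it, of which there are at most $|C_{i+1}| \le 2k$; hence $\deg_{H_{i+1}} v \ge \deg_{G_i} v - 2k \ge \delta - 2k$, using the inductive bound on $G_i$. For $G_{i+1}$: it is formed from $G_i[B]$ by re-attaching, as stubs of index $i+1$, all the edge-ends of $C_{i+1}$ that lay in $B$; so for every $v \in B$ the degree is unchanged, $\deg_{G_{i+1}} v = \deg_{G_i} v \ge \delta$ (each crossing edge incident to $v$ is replaced by exactly one stub incident to $v$, and the non-crossing edges inside $B$ are retained). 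This gives the second and fourth bullets, and the third is immediate from Lemma~\ref{lem:cut}.

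The only point requiring slight care — and the closest thing to an obstacle — is bookkeeping the stub indices correctly: one must check that stubs of index $i+1$ are \emph{only} ever created at step $i+1$ and never afterwards (true, since index $i+1$ is used exclusively when splitting $G_i$), and that the statement ``at most $2k$ stubs of index $i$ in total in $H_1,\ldots,H_n$'' matches ``at most $2k$ stubs created at step $i$'' — this holds because a stub of index $i$ created when splitting $G_{i-1}$ sits in $G_i$, then either survives into $H_{i+1}$ when $G_i$ is not split off entirely (it lands in whichever side $A$ or $B$), or into $H_{i+1} = G_i$ directly; in all cases each such stub ends up in exactly one $H_j$, so the total over $H_1,\ldots,H_n$ equals the number created at step $i$, which is $|C_i| \le 2k$. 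Everything else is a one-line consequence of the inductive hypothesis and Lemma~\ref{lem:cut}, so I would keep the write-up terse.
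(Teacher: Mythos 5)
Your proof is correct and follows essentially the same inductive approach as the paper: stubs of index $i$ correspond bijectively to the at most $2k$ edges of $C_i$, degrees in $G_i$ are preserved from $G_{i-1}$ because each lost crossing edge is replaced by a stub, and $H_i$ inherits $k$-edge-connectivity (or triviality) from Lemma~\ref{lem:cut} while losing at most $2k$ from each vertex's degree. Your write-up is somewhat more explicit than the paper's on the bookkeeping of where stubs of index $i$ land in $H_1,\ldots,H_n$, but the underlying argument is identical.
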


\begin{inproof}
Since there is a one-to-one correspondence between the stubs of index $i$ and the edges in $C_i$ and $|C_i|\leq 2k$, there are at most $2k$ stubs with index $i$.  Moreover, $\deg_{G_{i-1}}(v)=\deg_{G_{i}}(v)$ for every $v\in V(G_{i})$ and therefore $G_{i}$ has minimum degree at least $\delta$ by induction. 

By construction, $H_i$ is $k$-edge-connected or has only one vertex and since $G_{i-1}$ has minimum degree at least $\delta$, $H_i$ has minimum degree at least $\delta-2k$ as required.

\end{inproof}

We call a $\{T,T_1,T_2\}$-decomposition {\em balanced} if the numbers of copies of $T_1$ and $T_2$ differ by at most $|E(T)|$.
Since $G_n$ is empty, it has a balanced $\{T,T_1,T_2\}$-decomposition $\TT_n$ (the trivial one).
Next, we proceed inductively, constructing a balanced $\{T,T_1,T_2\}$-decomposition $\TT_{i-1}$ of $G_{i-1}$ from a balanced decomposition $\TT_i$ of $G_i$ for $i\geq 2$. Moreover, in each step we  increase the number of copies of $T_1$ in the decomposition by at most $|E(T)|$ and keep the number of copies of $T_2$ the same, or the other way round, we  increase the number of copies of $T_2$ in the decomposition by at most $|E(T)|$ and keep the number of copies of $T_1$ the same.

In the last step, we construct a $\{T,T_1,T_2\}$-decomposition of $G$ from a balanced $\{T,T_1,T_2\}$-decomposition of $G_1$ in a similar way, ensuring that the numbers of copies of $T_1$ and $T_2$ in the constructed decomposition are the same.

Roughly speaking, each step of construction has two phases: first, we replace every stub $s$ in $G_i$ of index $i$ by a subtree $\st(s)$ in $G_{i-1}$. Then we decompose the remaining part of $G_{i-1}$ using Lemma~\ref{lem:conn-twotrees}. In the last step, when i=1, we will proceed in a slightly different way to ensure that the resulting decomposition will contain the same number of copies of $T_1$ and $T_2$.

More formally, given a balanced $\{T,T_1,T_2\}$-decomposition $\TT_i$ of $G_{i}$, let $j=1$ if the number of copies of $T_1$ in $\TT'_i$ is smaller than the number of copies of $T_2$ and $j=2$ otherwise. Recall that $V(G_{i-1})=V(H_i)\dot{\cup}V(G_i)$ and $E(G_{i-1})=E(H_i)\dot{\cup}C_{i} \dot{\cup} E(G_i)\setminus S_{i}(G_i)$.

If $H_i$ has more than one vertex, it is $k$-edge-connected and thus, by Corollary~\ref{cor:split}, $H_i$ contains a spanning subgraph $R_i$ with minimum degree at least $4km$ such that $H_i'=H_i\setminus E(R_i)$ is $K(T,T_j)$-edge-connected. If $H_i$ has only one vertex, let $R_i=H_i$ and $H'_i$ is an isolated vertex. 

We replace every embedding of $T$, $T_1$ or $T_2$ in $\TT_i$ which contains a stub $s\in S_{i}$ by an embedding of $T$, $T_1$ or $T_2$ in $G_{i-1}\setminus E(H'_i)$. This yields a partial $\{T,T_1,T_2\}$-decomposition $\TT'$ of $G_{i-1}$. 

Moreover, $\TT'$ is such that $E(H'_i)\subseteq E(G_{i-1})\setminus E(\TT')\subseteq E(H_i)$. Thus, the stub graph $H_i''=(V(H_i),E(G_{i-1})\setminus E(\TT'))$ has a $\{T,T_j\}$-decomposition $\TT''$ that contains at most $|E(T)|$ copies of $T_j$ by Lemma~\ref{lem:conn-twotrees} and by Observation~\ref{obs:stub-dec}, because $H_i''$ is either $K(T,T_j)$-edge-connected or has only one vertex (and therefore no edges). 
Then, $\TT_{i+1}=\TT'\cup\TT''$ forms a $\{T,T_1,T_2\}$-decomposition of $G_{i-1}$. Since there are at most $|E(T)|$ copies of $T_j$ in $\TT'$, from the choice of $j$ it follows that if the difference between the number of copies of $T_1$ and $T_2$ in $\TT_i$ was at most $|E(T)|$, the difference in $\TT_{i-1}$ is also at most $|E(T)|$.

Let $K\in\TT_i$ be an embedding of $T$, $T_1$ or $T_2$ containing a stub $s$ with index $i$ (i.e., $s\in S_{i}(G_i)$). Note that $K$ contains at most one stub in $S_{i}(G_i)$. For each such $K$, we construct $K'\in \TT'$ such that $K\setminus S_i(G_i)\subseteq K'$ in the following way. We assume that $K$ is an embedding of $T$, the construction for $T_1$ and  $T_2$ is analogous. 

Let $\II$ be the set of the indices of the stubs in $K$. Let $v$ be the vertex incident with $s$ and let $uv$ be the edge in $C_{i}$ corresponding to the stub $s$. 
We find an embedding $S_s$ of $\st(s)$ such that
\begin{itemize}
\item $v\in V(S_s)$ and it corresponds to the root of $\st(s)$,
\item $uv\in E(S_s)$ and $S_s\setminus v\subseteq R_i$, and
\item no stub in $S_s$ has its index in $\II$.
\end{itemize}

Then, $K':=S_s \cup (K\setminus s)$ is an embedding of $T$ in $G_{i-1}$.

Moreover, we ensure that $E(S_{s_1})$ and $E(S_{s_2})$ are disjoint for every two distinct stubs $s_1, s_2\in S_i(G_i)$. Thus, the edge sets of the embeddings in $\TT'$ will be mutually disjoint. 

We construct an embedding $S_s$ of $\st(s)$ greedily. 
Starting from $S_s$ which consists of the root in $v$ and the edge $uv$, we add edges and stubs one by one. 
At the same time, we remove the used edges and stubs from $R_i$, making sure that no edge and no stub is used in more than one embedding. 
Let $\II'$ be the set of the indices of the stubs in $S_s$. 
Assume that $S_s$ is not yet an embedding of $\st(s)$ and let $w\in R_i$ be a vertex of $S_s$ to which we need to add an edge or a stub. 
We argue that either $w$ has a neighbor $w'$ in $R_i\setminus S_s$ and therefore we can extend $S_s$ by the edge $ww'$ (removing $ww'$ from $R_i$) or $w$ is incident with a stub in $R_i$ such that its index is not in $\II\cup \II'$. 
This is indeed the case; at the beginning, $w$ had degree at least $4km$ in $R_i$, at most $(2k-1)m$ edges and stubs from $R_i$ were removed by embedding trees corresponding to stubs in $S_{i}(G_i)\setminus \{s\}$ and at most $m$ edges and stubs incident with $v$ were removed or cannot be used for extending $S_s$ because the other endpoint of the edge is already in $S_s$. 
This leaves at least $2km$ available edges and stubs incident with $v$. 
Since $|\II\cup \II'|< m$, $R_i$ contains a stub incident with $w$ such that its index is not in $\II\cup \II'$, or an edge $ww'$ with $w'\notin S_s$.

At the last step, it remains to construct a $\{T,T_1,T_2\}$-decomposition of $G$ from a balanced $\{T,T_1,T_2\}$-decomposition of $G_1$. Note that $H_1$ is not a single vertex and has no stubs. Let $R_1$, $H_1'$ be subgraphs of $H_1$ defined as before, in particular, $H'_1$ is $k_T$-edge-connected (since $k_0\geq k_T$).


Let $t$, $t_1$ and $t_2$ be the numbers of embeddings of $T$, $T_1$ and $T_2$ in $\TT_1$ respectively. Without loss of generality, we assume that $t_1\leq t_2$.

Since $|E(G)|$ is divisible by the greatest common divisor of $|E(T)|$ and $|E(T_1)|+|E(T_2)|$, 
$|E(G)|- t|E(T)|-t_2(|E(T_1)|+|E(T_2)|)$ is also divisible by the greatest common divisor of $|E(T)|$ and $|E(T_1)|+|E(T_2)|$. By Observation~\ref{obs:bezout}, there exists an integer $0\leq t'<|E(T)|$ such that $|E(G)|- t|E(T)|-t_2(|E(T_1)|+|E(T_2)|)-t'(|E(T_1)|+|E(T_2)|)$ is divisible by $|E(T)|$.

We greedily construct $t'+(t_2-t_1)$ copies of $T_1$ and $t'$ copies of $T_2$ in $R_1$, denote the resulting partial $\{T_1,T_2\}$-decomposition by $\TT^{*}$ and remove its edges from $R_1$. Note that the minimum degree of $R_1$ decreases by at most $2|E(T)|(|E(T_1)|+|E(T_2)|)$ and thus it is still at least $4km$. Thus, we can proceed in the same way as above, i.e., we construct a partial $\{T,T_1,T_2\}$-decomposition $\TT'$ from $\TT_1$ by expanding the stubs, using the remaining edges of $R_1$.

Then, $\TT^{*}\cup \TT'$ is a partial $\{T,T_1,T_2\}$-decomposition that contains the same number of copies of $T_1$ and $T_2$.
As before, let $H_1''$ be a graph obtained from $H_1'$ by adding unused edges of $R_1$. By our choice of $t'$, the number of edges of $H_1''$ divisible by $|E(T)|$ and thus by Theorem~\ref{thm:barat-thomassen} it has a $T$-decomposition $\TT''$. Then, $\TT^*\cup \TT'\cup \TT''$ is a  $\{T,T_1,T_2\}$-decomposition that contains the same number of copies of $T_1$ and $T_2$.
\end{proof}


\section*{Acknowledgements}
Part of this work was done while the first author was a postdoc at Laboratoire d’Informatique du Parall\'elisme,
\'Ecole Normale Sup\'erieure de Lyon,
69364 Lyon Cedex 07, France.

\bibliography{biblio} 
\bibliographystyle{abbrv}

\end{document}